\theoremstyle{plain}
\newtheorem{thm}{Theorem}[section]
\newtheorem*{thm*}{Theorem}
\newtheorem*{cor*}{Corollary}
\newtheorem{prop}[thm]{Proposition}
\newtheorem{lem}[thm]{Lemma}
\newtheorem{cor}[thm]{Corollary}
\newtheorem{claim}{Claim}
\newtheorem*{claim*}{Claim}
\theoremstyle{definition}
\newtheorem{defn}[thm]{Definition}
\newtheorem{ex}[thm]{Example}
\newtheorem{setting}[thm]{Setting}
\newtheorem{rem}[thm]{Remark}
\theoremstyle{remark}
\newtheorem*{ac}{Acknowledgments}
\numberwithin{equation}{thm}
\def\rank{\mathrm{rank}}
\def\e{\mathrm{e}}
\def\m{\mathfrak m}
\def\n{\mathfrak n}
\def\q{\mathfrak q}
\def\K{\mathrm{K}}
\newcommand{\rma}{\mathrm{a}}
\newcommand{\rme}{\mathrm{e}}
\newcommand{\rmr}{\mathrm{r}}
\newcommand{\rmI}{\mathrm{I}}
\newcommand{\rmK}{\mathrm{K}}
\newcommand{\calR}{\mathcal{R}}
\newcommand{\fkm}{\mathfrak{m}}
\newcommand{\mapright}[1]{%
\smash{\mathop{%
\hbox to 1cm{\rightarrowfill}}\limits^{#1}}}
\newcommand{\mapleft}[1]{%
\smash{\mathop{%
\hbox to 1cm{\leftarrowfill}}\limits_{#1}}}
\begin{document}

\setlength{\baselineskip}{14pt}
%%%%%%%%%%%%%%%%%%%%%%%%%%%%%%%%%%%%%%%%%%%%%%%%%%%%%%%%%%%%%
\title[On the almost Gorenstein property of determinantal rings]{On the almost Gorenstein property of \\ determinantal rings}
\author{Naoki Taniguchi}
\address{Department of Mathematics, School of Science and Technology, Meiji University, 1-1-1 Higashi-mita, Tama-ku, Kawasaki 214-8571, Japan}
\email{taniguchi@meiji.ac.jp}
\urladdr{http://www.isc.meiji.ac.jp/~taniguchi/}

\thanks{2010 {\em Mathematics Subject Classification.} 13H10, 13H15, 13A02}
\thanks{{\em Key words and phrases.} almost Gorenstein local ring, almost Gorenstein graded ring, determinantal ring}
\thanks{The author was partially supported by JSPS Grant-in-Aid for Scientific Research 26400054.}

\begin{abstract} 
In this paper we investigate the question of when the determinantal ring $R$ over a field $k$ is an almost Gorenstein local/graded ring in the sense of \cite{GTT}. As a consequence of the main result, we see that if $R$ is a non-Gorenstein almost Gorenstein local/graded ring, then the ring $R$ has a minimal multiplicity.
\end{abstract}

%%%%%%%%%%%%%%%%%%%%%%%%%%%%%%%%%%%%%%%

\maketitle
%{\footnotesize \tableofcontents}

%%%%%%%%%%%%%%%%%%%%%%%%%%%%%%%%%%%%%%%

\section{Introduction}\label{intro}
Let $k$ be an infinite field and $m$, $n \ge 2$ be integers. Let $X = \left[X_{ij}\right]$ be an $m\times n$ matrix of indeterminates over the field $k$. We denote by $S=k[X]$ the polynomial ring generated by $\{X_{ij}\}_{1 \le i \le m, ~1\le j \le n}$ over $k$ and consider $S$ as a $\Bbb Z$-graded ring under the grading $S_0 = k,~ X_{ij} \in S_1$. Let $\rmI_t(X)$ be the ideal of $S$ generated by the $t \times t$-minors of the matrix $X$, where $2 \le t \le \min\{m, n\}$. We put $R = S/{\rm I}_t(X)$ which is called {\it the determinantal ring}. The result of M. Hochster and J. A. Eagon (\cite{HE}) insists that the ring $R$ is always a Cohen-Macaulay normal integral domain of dimension $mn-(m-(t-1))(n-(t-1))$. Moreover the Gorensteinness of the determinantal ring is characterized by $m=n$ (\cite{Svanes}).

%There are known numerous examples of Cohen-Macaulay rings and among the progress of the theory of Cohen-Macaulay rings, we often encounter non-Gorenstein Cohen-Macaulay rings in the field of not only commutative algebra, but also algebraic geometry, representation theory, invariant theory, and combinatorics. Based on this observation, we have a natural query of why there are so many Cohen-Macaulay rings which are not Gorenstein. 

Almost Gorenstein rings are one of the candidates for a new class of Cohen-Macaulay rings which may not be Gorenstein, but sufficiently good next to the Gorenstein rings. The concept of this kind of local rings dates back to the paper \cite{BF} given by V. Barucci and R. Fr\"oberg in $1997$ where the base local ring is analytically unramified of dimension one. However, since the notion given by \cite{BF} was not flexible for the analysis of analytically ramified case, so that in 2013 S. Goto, N. Matsuoka and T. T. Phuong \cite{GMP} proposed the notion over one-dimensional Cohen-Macaulay local rings, using the behaviors of the first Hilbert coefficient of canonical ideals. Finally in 2015 S. Goto, R. Takahashi and the author \cite{GTT} gave the definition of almost Gorenstein {\it graded/local} rings of arbitrary dimension in order to use the theory of Ulrich modules. It is proved by \cite{GTT} that every one-dimensional Cohen-Macaulay local ring of finite Cohen-Macaulay representation type and two-dimensional rational singularity are almost Gorenstein local rings. In addition non-Gorenstein almost Gorenstein rings have G-regularity in the sense of \cite{greg}, that is, every totally reflexive module is free. Let us now remark that even if $A_N$ is an almost Gorenstein local ring, $A$ is not necessarily an almost Gorenstein graded ring, where $N$ denotes the unique graded maximal ideal of a Cohen-Macaulay graded ring $A$ (see \cite[Theorems 2.7, 2.8]{GMTY2}, \cite[Example 8.8]{GTT}).

The purpose of the present paper is to study the question of when the determinantal rings are almost Gorenstein rings. 
Throughout this paper, unless otherwise specified we assume $m \le n$, because we may replace $X$ by its transpose if necessary. Let $M=R_+$ stand for the graded maximal ideal of $R$.

With this notation the main result of this paper is stated as follows. 

\begin{thm}\label{main}
The following conditions are equivalent.
\begin{enumerate}
\item[$(1)$] $R$ is an almost Gorenstein graded ring.
\item[$(2)$] $R_M$ is an almost Gorenstein local ring.
\item[$(3)$] Either $m=n$, or $m\ne n$ and $m=t=2$.
\end{enumerate}
\end{thm}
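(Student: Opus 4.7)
The plan is to prove the cycle of implications $(3) \Rightarrow (1) \Rightarrow (2) \Rightarrow (3)$. The implication $(1) \Rightarrow (2)$ is routine: localizing a graded almost Gorenstein defining sequence
$$0 \longrightarrow R \longrightarrow K_R(-\a(R)) \longrightarrow C \longrightarrow 0$$
at $M$ yields the corresponding local defining sequence, since both the canonical module and the Ulrich condition behave well under this localization. So the substantive content lies in $(3) \Rightarrow (1)$ and $(2) \Rightarrow (3)$.

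For $(3) \Rightarrow (1)$: when $m = n$, Svanes's theorem gives that $R$ is Gorenstein, hence trivially almost Gorenstein graded. The genuine work lies in the case $m = t = 2$, $m < n$, where $R$ is the homogeneous coordinate ring of the Segre variety $\mathbb{P}^1 \times \mathbb{P}^{n-1}$. Here I would exploit the explicit Segre-product description of $K_R$, choose a suitable linear system of parameters, and construct directly an exact sequence of the above form whose cokernel $C$ is Ulrich; the Eagon--Northcott resolution provides the framework for reading off the necessary generators of the canonical module.

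For $(2) \Rightarrow (3)$: suppose $R_M$ is almost Gorenstein local with $m < n$, so that $R$ is not Gorenstein by Svanes. The key input from \cite{GTT} is that any non-Gorenstein almost Gorenstein Cohen-Macaulay local ring must have minimal multiplicity. Since $\operatorname{edim} R_M = mn$ and $\dim R_M = mn - (m-t+1)(n-t+1)$, this forces the numerical identity
$$e(R) = (m-t+1)(n-t+1) + 1.$$
Using the classical formula for the degree of the generic determinantal variety, I would then carry out a direct case analysis showing this identity holds only when $m = t = 2$.

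The main obstacle will be the combinatorial inequality $e(R) > (m-t+1)(n-t+1) + 1$ whenever $m < n$ and $(m,t) \neq (2,2)$. When $t = 2$ and $m \ge 3$, one has $e(R) = \binom{m+n-2}{m-1}$, which grows at least quadratically in $n$, whereas the right-hand side $(m-1)(n-1) + 1$ is linear, so the inequality is elementary. When $t \ge 3$, I would apply the product (Giambelli-type) formula for $e(R)$ and bound several of its factors from below by $2$, or alternatively induct on $t$ by contracting a row or column of the matrix, the few small exceptional pairs being handled by direct computation. Producing the explicit Ulrich cokernel in the Segre case of $(3) \Rightarrow (1)$ is a secondary but non-trivial point that I would expect to verify through the bigraded structure of the Segre product.
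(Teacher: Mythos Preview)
Your argument for $(2)\Rightarrow(3)$ hinges on the assertion that every non-Gorenstein almost Gorenstein \emph{local} ring has minimal multiplicity, which you attribute to \cite{GTT}. No such result exists there, and the claim is false: for instance the numerical semigroup ring $k[[t^4,t^5,t^7]]$ is almost Gorenstein (the semigroup is almost symmetric with pseudo-Frobenius set $\{3,6\}$), has type $2$ so is not Gorenstein, yet has multiplicity $4$ while $\operatorname{edim}-\dim+1=3$. What \emph{is} true is the graded statement: a non-Gorenstein almost Gorenstein \emph{graded} ring, standard graded over a field, satisfies $\rma(R)=1-\dim R$ (\cite[Theorem 1.6]{GTT}), and this does force minimal multiplicity via the $h$-vector. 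That is precisely how the paper dispatches $(1)\Rightarrow(3)$ in Lemma~\ref{5.1}. But the paper explicitly warns that the local property need not lift to the graded one, so you cannot reduce $(2)$ to $(1)$; indeed, minimal multiplicity is recorded in Remark~\ref{5.3} as a \emph{consequence} of Theorem~\ref{main}, not an input.

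The paper's actual route to $(2)\Rightarrow(3)$ is entirely different and constitutes the bulk of the work. From an almost Gorenstein defining sequence for $A=R_M$ one extracts (Corollary~\ref{2.3}) the upper bound $\mu_A(\fkm\rmK_A)\le mn+(d-1)(\rmr(A)-1)$. A lower bound $\mu_R(M\rmK_R)\ge mn\cdot\rmr(R)-\rank_S G$ comes from Proposition~\ref{2.4} applied to the presentation of $\rmK_R$ obtained by dualizing the Lascoux--Roberts resolution, whose last two ranks are computed explicitly in Section~4. Comparing the two bounds yields a combinatorial inequality that, after substantial manipulation, forces $t\le 2$; the case $t=2$ is then handled separately (Section~3) by computing $\mu_R(MQ^{\ell})$ directly (Lemma~\ref{3.3}) and invoking another elementary inequality (Lemma~\ref{3.5}) to conclude $m=2$. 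Your proposed multiplicity comparison would have to be replaced by this two-sided squeeze on $\mu(\fkm\rmK_A)$.
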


As a consequence of Theorem \ref{main}, the almost Gorenstein property for the local ring $R_M$ implies that the ring has a minimal multiplicity, provided $m \ne n$ (see Remark \ref{5.3}). %This is the first example that the almost Gorenstein property induces the local ring has a minimal multiplicity. 

Furthermore Theorem \ref{main} yields the following application. In the case where the field $k$ has a characteristic $0$, then the determinantal ring appears as the ring of invariants. 
More precisely let $Y$ (resp. $Z$) be an $m \times (t-1)$ matrix (resp. a $(t-1) \times n$ matrix) of indeterminates over $k$. We put $A=k[Y, Z]$ and $G={\rm GL}_{t-1}(k)$ the general linear group. Suppose that the group $G$ acts on the ring $A$ as $k$-automorphisms by taking $Y$ (resp. $Z$) onto $YT^{-1}$ (resp. $TZ$) for every $T \in G$. Then the classical result of C. D. Concini and C. Procesi (\cite{CP}) shows that the ring $A^G$ of invariants is generated by the entries of the $m \times n$ matrix $X=YZ$ and the ideal of relations on $X$ is generated by the $t \times t$-minors of $X$ (see also \cite[Theorem (7.6)]{BV}). Hence Theorem \ref{main} induces the following invariant-theoretic result.

\begin{cor}
Let $A$ and $G$ be as above. Then $A^G$ is an almost Gorenstein graded ring if and only if either $m=n$, or $m\ne n$ and $m=t=2$.
\end{cor}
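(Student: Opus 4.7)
The plan is to prove the chain $(3)\Rightarrow(1)\Rightarrow(2)\Rightarrow(3)$, of which $(1)\Rightarrow(2)$ is a standard transfer from graded to local almost Gorenstein rings established in \cite{GTT} that I would invoke directly. For $(3)\Rightarrow(1)$, I would argue by two cases. If $m=n$, the ring $R$ is Gorenstein by \cite{Svanes} and hence trivially almost Gorenstein graded. If $m=t=2$ with $m<n$, the substitution $X_{ij}\mapsto x_iy_j$ identifies $R$ with the Segre product $k[x_1,x_2]\mathbin{\#}k[y_1,\ldots,y_n]$, whose canonical module is the well-known $K_R = k[x_1,x_2](-2)\mathbin{\#}k[y_1,\ldots,y_n](-n)$, concentrated in degrees $\ge n$, so $a(R) = -n$. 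I would then construct the almost Gorenstein sequence $0\to R\to K_R(n)\to C\to 0$ by sending $1$ to a chosen element of $(K_R)_n$ and computing
\begin{equation*}
H_C(z) = H_{K_R(n)}(z) - H_R(z) = \frac{n-2}{(1-z)^{n}},
\end{equation*}
which is exactly the Hilbert series of an Ulrich module of dimension $\dim R-1 = n$ with $n-2$ generators in degree $0$. The remaining step is to verify that $C$ is Cohen-Macaulay, for which I would exhibit a linear system of parameters on $C$ built from the $y$-coordinates, or identify $C$ concretely as a module over the $y$-factor of the Segre decomposition.

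For $(2)\Rightarrow(3)$, I argue contrapositively: assume $m\ne n$ (so $R$ is not Gorenstein) and $(m,t)\ne(2,2)$, and show $R_M$ is not almost Gorenstein local. The strategy is to extract a numerical obstruction from the Ulrich condition: the almost Gorenstein local structure on $R_M$, through the identities $r(R) = 1+\mu(C) = 1+e(C)$ combined with Abhyankar-type inequalities on multiplicity, forces $R$ to have minimal multiplicity $e(R) = (m-t+1)(n-t+1)+1$. I would then rule out minimal multiplicity in both sub-cases. If $t\ge 3$, the ideal $I_t(X)$ has no generators of degree $\le 2$, so after modding out by a linear system of parameters the $h$-vector of $R$ satisfies $h_2 = \binom{c+1}{2}\ge 1$ with $c = (m-t+1)(n-t+1)$, yielding $e(R)\ge 1+c+\binom{c+1}{2}>c+1$. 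If $t=2$ and $m\ge 3$, the Segre product $k[x_1,\ldots,x_m]\mathbin{\#}k[y_1,\ldots,y_n]$ has multiplicity $\binom{m+n-2}{m-1}$, which exceeds $(m-1)(n-1)+1$ by an elementary combinatorial inequality for $m\ge 3$ and $n\ge m+1$.

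The principal obstacles will be the Cohen-Macaulayness check on the cokernel $C$ in $(3)\Rightarrow(1)$, which requires a careful analysis of the Segre product structure, and the clean derivation of the minimal multiplicity bound from the local almost Gorenstein data in $(2)\Rightarrow(3)$; once those keystones are in place, the remaining $h$-vector and combinatorial verifications are routine.
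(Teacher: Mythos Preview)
Your minimal-multiplicity argument is correct for the \emph{graded} direction, and since the Corollary only asserts the graded equivalence (after identifying $A^G\cong R$ via De Concini--Procesi), it actually suffices: if $R$ is almost Gorenstein graded and not Gorenstein, then \cite[Theorem~1.6]{GTT} forces $a(R)=1-\dim R$, hence the $h$-polynomial has degree~$1$ and $e(R)=(m-t+1)(n-t+1)+1$; your two subcases (the $h_2$ computation for $t\ge 3$, the Segre multiplicity for $t=2$, $m\ge 3$) then exclude this unless $m=t=2$. Together with your $(3)\Rightarrow(1)$ this proves the Corollary, and it is exactly what the paper does in Lemma~\ref{5.1}. (Incidentally, the Cohen--Macaulayness of $C$ that worries you is automatic by \cite[Lemma~3.1(2)]{GTT} once the map is injective, which it is because $R$ is a domain.)

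The genuine gap is in your $(2)\Rightarrow(3)$. You assert that the almost Gorenstein \emph{local} structure on $R_M$, via $r(R)=1+\mu(C)=1+e(C)$ and ``Abhyankar-type inequalities,'' forces minimal multiplicity of $R$. But no such mechanism is available: the Ulrich sequence $0\to R_M\to \rmK_{R_M}\to C\to 0$ need not come from a graded map, so one cannot deduce $a(R)=1-d$, and the identity $e(C)=r(R)-1$ constrains the multiplicity of $C$ (a module of dimension $d-1$), not of $R$. The paper stresses precisely this point---local almost Gorensteinness is strictly weaker than the graded version for standard graded rings (\cite[Example~8.8]{GTT}, \cite{GMTY2})---and this is why $(2)\Rightarrow(3)$ requires the much heavier machinery of Sections~3--5: a lower bound for $\mu_R(M\rmK_R)$ extracted from the Lascoux--Roberts presentation of $\rmK_R$, an upper bound from Corollary~\ref{2.3}, and a careful combinatorial comparison. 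Your route through $(2)$ therefore does not close, though as noted it is unnecessary for the Corollary itself.
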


Let us now explain how this paper is organized. In Section $2$ we shall give fundamental properties on almost Gorenstein rings, including the definition in the sense of \cite{GTT}. The purpose of Section $3$ is to give a proof of Theorem \ref{main} in the case where $t=2$. In Section $4$ we will recall the resolution of determinantal rings due to P. Roberts in order to determine the lower bound of the number of generators for $M \rmK_R$, where $\rmK_R$ denotes the graded canonical module of $R$. Finally we shall prove Theorem \ref{main} in Section $5$.

%%%%%%%%%%%%%%%%%%%%%%%%%%%%%%%%%%%%%%%%%%%%%%%%%%%%%%%%%%%%%%%%%%%%%%

\section{Preliminaries}

The aim of this section is mainly to summarize some basic results on almost Gorenstein rings, which we will use throughout this paper. In what follows, let $(R, \m)$ be a Cohen-Macaulay local ring with $d=\dim R$ possessing the canonical module $\rmK_R$. Then it is well-known that $R$ is a homomorphic image of a Gorenstein ring. 

\begin{defn}\label{2.1}(\cite[Definition 3.3]{GTT})
We say that $R$ is {\it an almost Gorenstein local ring}, if there exists an exact sequence
$$
0 \to R \to \rmK_R \to C \to 0
$$
of $R$-modules such that $\mu_R(C) = \rme^0_{\m}(C)$, where $\mu_R(C)$ denotes the number of elements in a minimal system of generators for $C$ and
$$
\rme^0_{\m}(C) = \lim_{n \to \infty}\frac{\ell_R(C/\m^{n+1}C)}{n^{d-1}}\cdot (d-1)!
$$ 
is the multiplicity of $C$ with respect to $\fkm$.
\end{defn}

Note that every Gorenstein ring is an almost Gorenstein local ring, and the converse holds if the ring $R$ is Artinian (\cite[Lemma 3.1 (3)]{GTT}). Definition \ref{2.1} requires  that if $R$ is an almost Gorenstein local ring, then $R$ might not be Gorenstein but the ring $R$ can be embedded into its canonical module $\K_R$ so that the difference $\K_R/R$ should have good properties.
For any exact sequence
$$
0 \to R \to \rmK_R \to C \to 0
$$
of $R$-modules, we notice that $C$ is a Cohen-Macaulay $R$-module of dimension $d-1$, if $C \ne (0)$ (\cite[Lemma 3.1 (2)]{GTT}).
Suppose that $R$ possesses an infinite residue class field $R/ \m$. Set $\overline{R}=R/[(0):_RC]$ and let $\overline{\m}$ denote the maximal ideal of $\overline{R}$. 
Choose elements $f_1, f_2, \ldots, f_{d-1} \in \m$ such that $(f_1, f_2, \ldots, f_{d-1})\overline{R}$ forms a minimal reduction of $\overline{\m}$. Then we have
$$
\rme_{\m}^0(C) = \rme_{\overline{\m}}^0(C) = \ell_R(C/(f_1, f_2, \ldots, f_{d-1})C) \ge \ell_R(C/\m C) = \mu_R(C). 
$$
Therefore $\rme_{\m}^0(C) \ge \mu_R(C)$ and we say that $C$ is {\it an Ulrich $R$-module} if $\rme_{\m}^0(C) = \mu_R(C)$, since $C$ is {\it a maximally generated maximal Cohen-Macaulay $\overline{R}$-module} in the sense of \cite{BHU}. Thus $C$ is an Ulrich $R$-module if and only if $\m C = (f_1, f_2, \ldots, f_{d-1})C$. Therefore if $\dim R=1$, then the Ulrich property for $C$ is equivalent to saying that $C$ is a vector space over $R/\m$.

One can construct many examples of almost Gorenstein rings (e.g., \cite{GMTY1, GMTY2, GMTY3, GMTY4, GTT}). The significant examples of almost Gorenstein rings are one-dimensional Cohen-Macaulay local rings of finite Cohen-Macaulay representation type and two-dimensional rational singularities (\cite[Corollary 11.4, Theorem 12.1]{GTT}). The origin of the theory of almost Gorenstein rings are the theory of numerical semigroup rings, so that there are a lot of examples of almost Gorenstein numerical semigroup rings (see \cite{BF, GMP}), also the corresponding semigroup is called {\it almost symmetric}.

Let us begin with the fundamental result on almost Gorenstein rings.

\begin{lem}\label{2.2}$($\cite[Corollary 3.10]{GTT}$)$
Let $R$ be an almost Gorenstein local ring and choose an exact sequence  
$$
0 \to R \overset{\varphi}{\longrightarrow} \rmK_R \to C \to 0
$$
of $R$-modules such that $\mu_R(C) = \rme_\fkm^0(C)$. If $\varphi (1) \in \fkm\rmK_R$, then $R$ is a regular local ring. 
Therefore $\mu_R(C) = \rmr (R) - 1$ if $R$ is not regular.
\end{lem}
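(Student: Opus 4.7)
First, I would tensor the given exact sequence $0\to R\xrightarrow{\varphi}\rmK_R\to C\to 0$ with $R/\fkm$. Since $R$ is $R$-free, the long exact Tor sequence yields the five-term exact row
\[
0\to \Tor^R_1(\rmK_R,R/\fkm)\to\Tor^R_1(C,R/\fkm)\to R/\fkm\xrightarrow{\bar\varphi}\rmK_R/\fkm\rmK_R\to C/\fkm C\to 0,
\]
in which $\bar\varphi(1)$ is the class of $\varphi(1)$ modulo $\fkm\rmK_R$. The hypothesis $\varphi(1)\in\fkm\rmK_R$ forces $\bar\varphi=0$, so a dimension count over $R/\fkm$ gives $\mu_R(\rmK_R)=\mu_R(C)$, hence $\mu_R(C)=\rmr(R)$. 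The ``therefore'' clause follows by contraposing the principal assertion: if $R$ is not regular, then $\varphi(1)\notin\fkm\rmK_R$, $\bar\varphi$ is injective, and the same count gives $\mu_R(C)=\rmr(R)-1$.

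For the principal assertion I would induct on $d=\dim R$. In the base case $d=1$, the Ulrich module $C$ has Krull dimension $0$, so $\fkm C=0$ and therefore $\fkm\rmK_R\subseteq\varphi(R)$. Combined with the reverse containment $\varphi(R)=R\varphi(1)\subseteq\fkm\rmK_R$ coming from the hypothesis, we get $\fkm\rmK_R=\varphi(R)\cong R$, i.e., $\fkm\rmK_R$ is a free $R$-submodule of $\rmK_R$ of rank one. After reducing (by completion and faithfully flat descent of regularity) to the case where $R$ is generically Gorenstein, $\rmK_R$ is realizable as a regular fractional ideal; then writing $\fkm\rmK_R=aR$ with $a\in K(R)$ a non-zero-divisor gives $\rmK_R\subseteq(\fkm\rmK_R:_{K(R)}\fkm)=a\fkm^{-1}$, and multiplying by $\fkm$ forces $\fkm\fkm^{-1}=R$. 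Hence $\fkm$ is invertible, whence principal, and $R$ is a discrete valuation ring.

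For $d\geq 2$ I would select an element $y\in\fkm$ which is simultaneously $R$-regular, $\rmK_R$-regular and $C$-regular, and whose image in $\overline R:=R/\ann_R(C)$ lies in a minimal reduction of $\overline\fkm$. Reducing modulo $y$ preserves exactness and yields
\[
0\to R/yR\to\rmK_{R/yR}\to C/yC\to 0,
\]
in which $C/yC$ is again Ulrich over $R/yR$ (by $\mu_{R/yR}(C/yC)=\mu_R(C)$ and the associativity formula $\rme^0_{\fkm/yR}(C/yC)=\rme^0_\fkm(C)$), and the image of $\varphi(1)$ still lies in $(\fkm/yR)\cdot\rmK_{R/yR}$. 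The inductive hypothesis applied to $R/yR$ then yields regularity of $R/yR$, whence $R$ is regular.

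The main obstacle is the inductive step: one must produce a single $y\in\fkm$ that simultaneously avoids every associated prime of $R$, $\rmK_R$ and $C$ while lying inside a prescribed minimal reduction of $\overline\fkm$. Handling all three constraints at once is a delicate application of prime avoidance combined with the infinite-residue-field hypothesis present in the definition of an Ulrich module, and verifying that the Ulrich property of $C/yC$ over $R/yR$ genuinely descends is the core technical point.
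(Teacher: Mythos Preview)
The paper does not give its own proof of Lemma~2.2; the lemma is simply quoted from \cite[Corollary~3.10]{GTT}. So there is nothing in the present paper to compare against.

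That said, your outline is correct and is essentially the argument one finds in \cite{GTT}. A few remarks:

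\begin{itemize}
\item In the $d=1$ case you do not need to complete to get generic Gorensteinness: the injection $R\hookrightarrow\rmK_R$ localizes to an injection $R_\fkp\hookrightarrow\rmK_{R_\fkp}$ at each minimal prime $\fkp$, and for Artinian local rings a monomorphism $R_\fkp\hookrightarrow\rmK_{R_\fkp}$ of modules of equal length is an isomorphism. Hence $R$ is already generically Gorenstein and $\rmK_R$ is a fractional ideal. Your computation $\fkm\fkm^{-1}=R$ is fine.
\item For the inductive step, the existence of $y$ (indeed of a full $R$-regular sequence $f_1,\ldots,f_{d-1}$ with $\fkm C=(f_1,\ldots,f_{d-1})C$) is exactly the content of a preparatory lemma in \cite{GTT}; it is proved by a general-position/prime-avoidance argument once the residue field is infinite. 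One small inaccuracy: the infinite-residue-field hypothesis is \emph{not} part of Definition~\ref{2.1} nor of the notion ``Ulrich'' as used here; it is only needed to produce minimal reductions. You should therefore first pass to $R[t]_{\fkm R[t]}$ (or similar) and observe that almost Gorensteinness, the Ulrich property, and regularity all ascend and descend along this faithfully flat extension.
\item The ``therefore'' clause is exactly as you say: if $R$ is not regular then $\varphi(1)\notin\fkm\rmK_R$, so $\bar\varphi$ is injective and the five-term sequence gives $\mu_R(C)=\mu_R(\rmK_R)-1=\rmr(R)-1$.
\end{itemize}

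In short, your plan is the standard one and would go through with the minor adjustments above; there is no genuine gap.
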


We apply this lemma to immediately get the following corollary.

\begin{cor}\label{2.3}
Let $R$ be an almost Gorenstein local ring but not Gorenstein. Choose an exact sequence
$$
0 \to R \overset{\varphi}{\longrightarrow} \rmK_R \to C \to 0
$$
of $R$-modules such that $C$ is an Ulrich $R$-module. 
Then 
$$
0 \to \m \varphi(1) \to \m \rmK_R \to \m C \to 0
$$
is an exact sequence of $R$-modules.

When this is the case, we have an inequality 
$$
\mu_R(\m \rmK_R) \le \mu_R(\m) + \mu_R(\m C).
$$
\end{cor}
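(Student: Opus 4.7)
The plan is to verify the exactness of the three-term sequence by carefully analyzing the three natural maps, then deduce the numerical inequality by the standard subadditivity of minimal numbers of generators along a short exact sequence.

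First, I would check the exactness of
$$
0 \to \m\varphi(1) \to \m \rmK_R \to \m C \to 0
$$
term by term. Injectivity of $\m\varphi(1) \hookrightarrow \m \rmK_R$ is clear, since it is the restriction of the injection $\varphi$. Surjectivity of the restricted map $\m \rmK_R \to \m C$ follows at once from the surjectivity of $\rmK_R \to C$ together with the fact that any element of $\m C$ is a finite sum of elements of the form $a\cdot \overline{x}$ with $a\in \m$ and $\overline{x} \in C$: lift $\overline{x}$ to $x \in \rmK_R$ to obtain $ax \in \m \rmK_R$ mapping onto $a\overline{x}$.

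The only nontrivial point is the identification of the kernel. The kernel of $\m \rmK_R \to \m C$ is $\m \rmK_R \cap \varphi(R) = \m \rmK_R \cap R\varphi(1)$, so I must show
$$
\m \rmK_R \cap R\varphi(1) = \m \varphi(1).
$$
The inclusion $\supseteq$ is trivial. For $\subseteq$, suppose $x \varphi(1) \in \m \rmK_R$ with $x \in R$. Here Lemma \ref{2.2} does the heavy lifting: since $R$ is not Gorenstein (in particular not regular), that lemma forces $\varphi(1) \notin \m \rmK_R$. Hence if $x$ were a unit, one could write $\varphi(1) = x^{-1}(x\varphi(1)) \in \m\rmK_R$, a contradiction. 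Thus $x \in \m$, and the desired equality follows. This establishes the exact sequence.

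Finally, for the numerical inequality, I would apply $R/\m \otimes_R -$ to the exact sequence and use right-exactness to obtain
$$
\mu_R(\m \rmK_R) \le \mu_R(\m \varphi(1)) + \mu_R(\m C).
$$
Since $\varphi$ is an injective $R$-homomorphism, the rule $r \mapsto r\varphi(1)$ yields an isomorphism $R \xrightarrow{\cong} R\varphi(1)$ of $R$-modules that restricts to an isomorphism $\m \cong \m\varphi(1)$, so $\mu_R(\m\varphi(1)) = \mu_R(\m)$, and the bound $\mu_R(\m \rmK_R) \le \mu_R(\m) + \mu_R(\m C)$ is immediate. The main conceptual obstacle is solely the kernel identification, and the invocation of Lemma \ref{2.2} resolves it cleanly.
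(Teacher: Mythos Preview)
Your proof is correct and follows exactly the approach the paper intends: the paper states the corollary as an immediate consequence of Lemma~\ref{2.2} without further detail, and your argument fills in precisely those details, with the key step being that $\varphi(1)\notin\m\rmK_R$ (from Lemma~\ref{2.2}, since $R$ is not Gorenstein hence not regular) forces $\m\rmK_R\cap R\varphi(1)=\m\varphi(1)$.
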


%%%%%%%%%%%%%%%%%%%%%%%%%%%%%%%%%%%%%%%%%%%%%%%%%%%%%%%

Let us discuss the case of graded rings. Let $R=\bigoplus_{n\ge 0}R_n$ be a Cohen-Macaulay graded ring and assume that $R_0$ is a local ring and there exists the graded canonical module $\rmK_R$. Let $a=\rma (R)$ stand for an a-invariant of $R$.

\begin{defn}(\cite[Definition 8.1]{GTT})
We say that $R$ is {\it an almost Gorenstein graded ring}, if there exists an exact sequence
$$0 \to R \to \mathrm{K}_R(-a) \to C \to 0$$
of graded $R$-modules such that $\mu_R(C) = \e_M^0(C)$, where $M$ denotes the unique graded maximal ideal of $R$. Remember that $\mathrm{K}_R(-a)$ stands for  the graded $R$-module whose underlying $R$-module is the same as that of $\K_R$ and whose grading is given by $[\mathrm{K}_R(-a)]_n = [\mathrm{K}_R]_{n-a}$ for all $n \in \Bbb Z$.
\end{defn}

Notice that every Gorenstein graded ring is by definition an almost Gorenstein graded ring. Moreover the local ring $R_{M}$ is almost Gorenstein if $R$ is an almost Gorenstein graded ring, because $C_M$ is an Ulrich $R_M$-module and $\mathrm{K}_{R_M} \cong [\mathrm{K}_R]_M$. Unfortunately, the converse is not true in general (e.g., \cite[Theorems 2.7, 2.8]{GMTY2}, \cite[Example 8.8]{GTT}).

%%%%%%%%%%%%%%%%%%%%%%%%%%%%%%%%%%%%%%%%%%%%%%%%%%%%%%%

For the rest of this section, let $S=k[X_1, X_2, \ldots, X_n]$ denote the polynomial ring over a field $k$. We consider $S$ as a $\Bbb Z$-graded ring with $S_0=k$, $\deg X_i=1$ for each $1 \le i\le n$. Let $K$ be a finitely generated graded $S$-module and assume that $K$ has a following presentation 
$$
F_1:=S^{\oplus a}(-1) \overset{\varphi}{\longrightarrow} F_0:=S^{\oplus b}\overset{\varepsilon}{\longrightarrow} K \longrightarrow 0 \ \ \ \ \ (\sharp)
$$
of graded $S$-modules, where $a, b$ are positive integers.
Let $M=S_+$ stand for the unique graded maximal ideal of $S$. Then $\varphi$ induces the graded $S$-linear homomorphism 
$$
\overline{\varphi} : (F_1)_+/M(F_1)_+ \longrightarrow  (F_0)_+/M(F_0)_+.
$$

With this notation we have the following, where $V$ denotes the image of $\overline{\varphi}$.

\begin{prop}\label{2.4}
There is the equality
$$
\mu_S(MK) = nb - \dim_k V.
$$
In particular, $\mu_S(MK) \ge nb -a$.
\end{prop}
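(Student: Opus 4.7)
The plan is to lift the given presentation of $K$ to a presentation of $MK$, and then reduce mod $M$. First I would observe that the image of $\varphi$ already lies in $MF_0$: since $F_1 = S^{\oplus a}(-1)$ is generated in degree $1$ while $F_0 = S^{\oplus b}$ is generated in degree $0$, the graded degree-$0$ homomorphism $\varphi$ sends the free generators of $F_1$ into $(F_0)_1 \subseteq MF_0$. Combined with the fact that $\varepsilon$ is surjective, so $\varepsilon(MF_0) = M\varepsilon(F_0) = MK$, and with $\ker(\varepsilon|_{MF_0}) = \ker(\varepsilon) \cap MF_0 = \mathrm{Im}(\varphi)$, the original sequence $(\sharp)$ upgrades to an exact sequence
\[
F_1 \xrightarrow{\varphi} MF_0 \longrightarrow MK \longrightarrow 0.
\]

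Next I would apply $- \otimes_S k$ (where $k = S/M$) to this sequence. The resulting exact sequence of $k$-vector spaces is
\[
F_1/MF_1 \xrightarrow{\overline{\varphi}} MF_0/M^2 F_0 \longrightarrow MK/M^2 K \longrightarrow 0.
\]
To identify this $\overline{\varphi}$ with the $\overline{\varphi}$ in the statement, I would note that $(F_1)_+ = F_1$ (every homogeneous element of $F_1$ lies in positive degree), so $(F_1)_+/M(F_1)_+ = F_1/MF_1$, and similarly $(F_0)_+ = MF_0$ (since $F_0$ is generated in degree $0$), so $(F_0)_+/M(F_0)_+ = MF_0/M^2 F_0$. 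With these identifications the two maps coincide, and in particular $V = \mathrm{Im}(\overline{\varphi})$ in the tensored sequence.

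Finally I would count dimensions. We have $MF_0/M^2 F_0 \cong (M/M^2)^{\oplus b}$, so $\dim_k MF_0/M^2 F_0 = nb$, while $\dim_k F_1/MF_1 = a$. The exact sequence above yields
\[
\mu_S(MK) \;=\; \dim_k MK/M^2 K \;=\; nb - \dim_k V,
\]
which is the first assertion. Since $\dim_k V \leq \dim_k F_1/MF_1 = a$, the inequality $\mu_S(MK) \geq nb - a$ follows at once.

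I do not anticipate any serious obstacle; the only subtle point is verifying that the map $\overline{\varphi}$ in the proposition agrees with the map obtained by tensoring $\varphi\colon F_1 \to MF_0$ with $k$, which comes down to the elementary observation about $(F_i)_+$ recorded above, using the placement of generators in degrees $0$ and $1$.
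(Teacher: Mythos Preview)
Your proof is correct and follows essentially the same route as the paper. The paper phrases the restricted presentation as $(F_1)_+ \to (F_0)_+ \to K_+ \to 0$ and then tensors with $S/M$, while you write it as $F_1 \to MF_0 \to MK \to 0$; your explicit identifications $(F_1)_+ = F_1$, $(F_0)_+ = MF_0$, and (implicitly) $K_+ = MK$ show these are the same sequence, and the dimension count is identical.
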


\begin{proof}
Notice that we have the exact sequence
$$
(F_1)_+ \to (F_0)_+ \to K_+ \to 0
$$
of graded $S$-modules. Applying the functor $S/M \otimes_S -$, we get 
$$
k^{\oplus a} \cong (F_1)_+/M (F_1)_+ \overset{\overline{\varphi}}{\longrightarrow} (F_0)_+/M(F_0)_+ \longrightarrow K_+/MK_+ \longrightarrow 0
$$
since $(F_1)_+ = S^{\oplus a}$.
Therefore
$$
0 \to V \to MF_0/M^2F_0 \to MK/M^2K \to 0
$$
yields that $\mu_S(MK) = n b -\dim_k V$
as desired.
\end{proof}

\begin{cor}
Let $m \ge 2$ be an integer. Let $X=[X_{ij}]$ be an $m \times (m+1)$ matrix of indeterminates over a field $k$, $S=k[X]$ be the polynomial ring generated by $\{X_{ij}\}_{1 \le i \le m, 1 \le j \le m+1}$ over $k$. We put $R=S/{\rm I}_m(X)$ and $M = R_+$.
Then 
$$
\mu_R(M \rmK_R) = (m^2 -1)(m+1).
$$
\end{cor}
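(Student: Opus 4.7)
The plan is to apply Proposition~\ref{2.4} to a minimal graded $S$-presentation of an appropriate shift of $\rmK_R$. Since $X$ is $m \times (m+1)$, the ideal $\rmI_m(X)$ has grade $2$, and by the Hilbert--Burch theorem (equivalently, the Eagon--Northcott complex) the ring $R$ admits the minimal graded free resolution
$$
0 \to S^{\oplus m}(-(m+1)) \xrightarrow{\partial} S^{\oplus (m+1)}(-m) \to S \to R \to 0,
$$
where $\partial$ is represented by an $(m+1) \times m$ matrix whose entries are, up to sign, the indeterminates $X_{ij}$; more precisely, its transpose is (a signed version of) $X$ itself, so each row of $\partial$ involves only the variables appearing in one specific column of $X$.

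Dualizing this resolution against $S$ and twisting by $-m(m+1)$ (since $\rmK_S = S(-m(m+1))$) yields the graded presentation
$$
S^{\oplus (m+1)}(-m^2) \xrightarrow{\varphi} S^{\oplus m}(-(m^2-1)) \to \rmK_R \to 0,
$$
whose presentation matrix is the transpose of $\partial$. Shifting by $m^2-1$, the module $K := \rmK_R(m^2-1)$ has a presentation of precisely the form $(\sharp)$ required by Proposition~\ref{2.4}, namely
$$
S^{\oplus (m+1)}(-1) \xrightarrow{\varphi} S^{\oplus m} \to K \to 0,
$$
over the polynomial ring $S$ in $n = m(m+1)$ variables, with $a = m+1$ and $b = m$. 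Proposition~\ref{2.4} then gives
$$
\mu_S(MK) \;=\; m^{2}(m+1) - \dim_k V \;\ge\; m^{2}(m+1) - (m+1) \;=\; (m^{2}-1)(m+1),
$$
where $V$ is the image of the induced map $\overline{\varphi}\colon k^{\oplus(m+1)} \to (M/M^2)^{\oplus m}$.

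The remaining point is to verify that equality holds, i.e.\ that $\overline{\varphi}$ is injective. By the explicit form of the presentation matrix, the $j$-th column of $\varphi$ is $(\pm X_{1j},\, \pm X_{2j},\, \ldots,\, \pm X_{mj})^{T}$, involving only the indeterminates that lie in the $j$-th column of $X$. Since the columns of $X$ involve pairwise disjoint sets of indeterminates, the $m+1$ columns of the matrix of $\varphi$ are linearly independent over $k$ in $(M/M^{2})^{\oplus m}$; hence $\dim_k V = m+1$ and the lower bound from Proposition~\ref{2.4} is attained. Finally, because the surjection $S \twoheadrightarrow R$ has the same residue field and shifting leaves $\mu$ unchanged, one has $\mu_R(M\rmK_R) = \mu_S(M\rmK_R) = \mu_S(MK)$, giving the claimed value $(m^{2}-1)(m+1)$. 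No step presents a serious obstacle; the only care needed is keeping track of the two twists that bring the presentation of $\rmK_R$ into the hypotheses of Proposition~\ref{2.4}.
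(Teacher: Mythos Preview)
Your proof is correct and follows essentially the same route as the paper's: both obtain the presentation $S^{\oplus(m+1)}(-m^2)\xrightarrow{X}S^{\oplus m}(-m^2+1)\to\rmK_R\to 0$ from the Eagon--Northcott complex and then apply Proposition~\ref{2.4} with $\dim_k V=m+1$. You are simply more explicit about the degree shifts and about why the columns of $X$ are linearly independent in $(M/M^2)^{\oplus m}$, whereas the paper states this in a single line.
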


\begin{proof}
The Eagon-Northcott resolution (\cite{EN}) implies the presentation of the graded canonical module $\rmK_R$ of $R$
$$
S^{\oplus (m+1)}(-m^2) \overset{X}{\longrightarrow} S^{\oplus m}(-m^2+1) \longrightarrow \rmK_R \longrightarrow 0
$$
as graded $S$-modules.
Since $V$ is generated by the columns of the matrix $X$, we then have $\dim_k V = m+1$. Thanks to Proposition \ref{2.4}, we get
$$
\mu_R(M \rmK_R) = m^2(m+1)-(m+1) = (m^2-1)(m+1)
$$
which completes the proof.
\end{proof}

%%%%%%%%%%%%%%%%%%%%%%%%%%%%%%%%%%%%%%%%%%%%%%%%%%%%%%%%%%%%%%%%%%%%%%%%%%%%%%%%%%%%%%%%%%%%%%%%%%%%%%%%%%%%%%%%%%%%%%%%%%%

\section{The case where $t=2$}

First of all we fix our notation and assumptions on which all the results in this section are based.

\begin{setting}\label{3.1}
Let $2 \le t \le m \le n$ be integers, $X=[X_{ij}]$ an $m \times n$ matrix of indeterminates over an infinite field $k$. We put $R = S/{\rm I}_t(X)$ and $M=R_+$, where $S=k[X]$ stands for the polynomial ring over the field $k$. Let $Y$ be the matrix obtained from $X$ by choosing the first $t-1$ columns and set $Q = \rmI_{t-1}(Y)R$. We denote by $x_{ij}$ the image of $X_{ij}$ in $R$.
\end{setting}

This section focuses our attention on proving a part of Theorem \ref{main} in the case where $t=2$.

\begin{thm}\label{3.2}
Suppose that $t=2$. If $R_M$ is an almost Gorenstien local ring and $m\ne n$, then $m=2$. 
\end{thm}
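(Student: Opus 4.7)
The plan is to apply Corollary~\ref{2.3} to the almost Gorenstein local ring $R_M$ and derive a numerical contradiction whenever $m\ge 3$, which then forces $m=2$. Since $m<n$, the Svanes criterion cited in the introduction guarantees that $R$ (and hence $R_M$) is not Gorenstein, so Corollary~\ref{2.3} yields an inequality
$$\mu(M\rmK_R)\ \le\ \mu(M)+\mu(MC)$$
for an Ulrich cokernel $C$ satisfying $\mu(C)=\mathrm{r}(R)-1$ by Lemma~\ref{2.2}. I shall work on the graded side throughout, since $M$ is the unique graded maximal ideal of $R$ and the numerical invariants above are insensitive to localization at $M$.

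Two of the three quantities are essentially immediate: $\mu(M)=mn$ because $\rmI_2(X)\subset M^2$; $d:=\dim R=m+n-1$ by Hochster--Eagon; and the Ulrich condition $MC=(f_1,\dots,f_{d-1})C$ gives $\mu(MC)\le(d-1)\mu(C)=(m+n-2)(\mathrm{r}(R)-1)$. For the third quantity I realize $R$ as the Segre ring $k[y_iz_j]\subset k[y_1,\dots,y_m,z_1,\dots,z_n]$ via $x_{ij}\mapsto y_iz_j$; then $\mathrm{Proj}\,R=\mathbb{P}^{m-1}\times\mathbb{P}^{n-1}$ has dualizing sheaf $\mathcal{O}(-m,-n)$, and consequently
$$[\rmK_R]_j\ =\ H^0\bigl(\mathbb{P}^{m-1}\times\mathbb{P}^{n-1},\mathcal{O}(j-m,j-n)\bigr)\ =\ \mathrm{Sym}^{j-m}(k^m)\otimes_k\mathrm{Sym}^{j-n}(k^n).$$
This gives $\rma(R)=-n$ and $\mathrm{r}(R)=\dim_k[\rmK_R]_n=\binom{n-1}{m-1}$; because the multiplication $R_1\otimes[\rmK_R]_n\to[\rmK_R]_{n+1}$ factors as the tensor product of two surjective symmetric-power multiplications, $\rmK_R$ is generated in the single degree $n$, and hence $\mu(M\rmK_R)=\dim_k[\rmK_R]_{n+1}=n\binom{n}{m-1}$.

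Substituting back, the inequality from Corollary~\ref{2.3} becomes
$$n\binom{n}{m-1}\ \le\ mn+(m+n-2)\left(\binom{n-1}{m-1}-1\right).$$
Using Pascal's identity together with the ratio $\binom{n-1}{m-1}/\binom{n-1}{m-2}=(n-m+1)/(m-1)$, a short simplification shows that the two sides agree when $m=2$ (consistent with that case remaining admissible), but the left side strictly dominates when $m\ge 3$ and $n>m$; already for $m=3$ the comparison reduces to $n(n-3)>0$, which holds for $n\ge 4$, and the gap only widens for larger $m$. The resulting numerical contradiction forces $m=2$, as required. The one delicate point in this plan is the identification $\mu(M\rmK_R)=n\binom{n}{m-1}$; once the Segre description of $\rmK_R$ and its single-degree generation are in hand, everything else is a bounded combinatorial estimate.
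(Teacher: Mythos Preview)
Your overall strategy coincides with the paper's: bound $\mu_{R_M}(\fkm\rmK_{R_M})$ from above via Corollary~\ref{2.3} and from below by an explicit computation, and compare. The paper reaches the same inequality
\[
n\binom{n}{m-1}\ \le\ mn+(m+n-2)\left(\binom{n-1}{m-1}-1\right)
\]
(written there with $\ell=n-m$) and then shows it fails for $m\ge 3$. Your computation of $\mu_R(M\rmK_R)$ via the Segre realization $R\cong k[y_iz_j]$ and $[\rmK_R]_j\cong\mathrm{Sym}^{j-m}(k^m)\otimes\mathrm{Sym}^{j-n}(k^n)$ is a clean alternative to the paper's Lemma~\ref{3.3} (an induction on $n$); both give $\mu_R(M\rmK_R)=n\binom{n}{m-1}$, and your values $\rmr(R)=\binom{n-1}{m-1}$ and $d=m+n-1$ agree with those used in the paper.

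The one real gap is the last step. You verify the failure of the displayed inequality only for $m=3$ (where it reduces to $\tfrac12 n(n-3)>0$) and then assert that ``the gap only widens for larger $m$''. This is not a proof. The paper handles this via Lemma~\ref{3.5}, an inequality
\[
(m^2-2m+\ell+2)\binom{m+\ell-1}{\ell}\ >\ (\ell+1)\bigl(m^2+(\ell-2)m-(\ell-2)\bigr)
\]
for $m\ge 3$ and $\ell\ge 1$, proved by induction on $\ell$; after the Pascal manipulations you mention, this is exactly the statement that your left side strictly exceeds your right side. You should either carry out an analogous induction or give some other uniform argument---the heuristic ``widening gap'' does not substitute for it, and in fact the paper's induction is on $\ell=n-m$ rather than on $m$, so the monotonicity you suggest is not quite what is being used.
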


To prove Theorem \ref{3.2}, we need some auxiliaries.

\begin{lem}\label{3.3}
Suppose that $t=2$. Then there is the equality
$$
\mu_R(M Q^{\ell}) = \binom{m+\ell}{\ell +1}\cdot n
$$
for every $\ell \ge 0$.
\end{lem}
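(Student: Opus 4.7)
The plan is to exploit the classical description of $R$ as a Segre product. Since $t=2$, there is a graded $k$-algebra isomorphism $R \cong A \mathbin{\#} B$, where $A = k[u_1, \ldots, u_m]$ and $B = k[v_1, \ldots, v_n]$ are polynomial rings and $\mathbin{\#}$ denotes the Segre product; the map sends $x_{ij} \mapsto u_i v_j$ and yields $R_d = A_d \otimes_k B_d$ for every $d \ge 0$. Under this identification $Q = (u_1 v_1, \ldots, u_m v_1) R$, and my first step is to verify, by a short direct computation in the Segre product, that
\[
(Q^\ell)_{\ell + s} = A_{\ell+s} \otimes_k v_1^\ell B_s \qquad (s \ge 0),
\]
using that each generator of $Q^\ell$ corresponds to an element of $A_\ell \otimes_k v_1^\ell$ and that multiplication in $R$ is termwise in the two tensor factors.

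Next I would observe that $MQ^\ell$ is generated in the single degree $\ell+1$, because $M$ is generated in degree $1$ and $Q^\ell$ in degree $\ell$. In particular $M \cdot MQ^\ell$ vanishes in degree $\ell + 1$, so graded Nakayama's lemma immediately gives $\mu_R(MQ^\ell) = \dim_k (MQ^\ell)_{\ell+1}$.

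The final step is to compute this dimension. Using the Segre description,
\[
(MQ^\ell)_{\ell+1} = R_1 \cdot (Q^\ell)_\ell = (A_1 \otimes_k B_1) \cdot (A_\ell \otimes_k v_1^\ell) = A_{\ell+1} \otimes_k v_1^\ell B_1,
\]
whose $k$-dimension factors as $\dim_k A_{\ell+1} \cdot \dim_k v_1^\ell B_1 = \binom{m+\ell}{\ell+1} \cdot n$, which is the formula claimed. There is no serious obstacle; the only point to be careful about is the bookkeeping in the Segre product (both for the graded pieces of $Q^\ell$ and for the fact that $MQ^\ell$ sits in a single generating degree), after which the count is immediate.
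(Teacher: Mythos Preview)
Your argument is correct. The Segre product identification $R\cong A\mathbin{\#}B$ with $x_{ij}\mapsto u_iv_j$ is standard for $t=2$, and under it one indeed has $(Q^\ell)_\ell=A_\ell\otimes kv_1^\ell$; the remaining identification $(MQ^\ell)_{\ell+1}=R_1\cdot(Q^\ell)_\ell=A_{\ell+1}\otimes v_1^\ell B_1$ and the dimension count are routine. The Nakayama step is also fine: since $MQ^\ell$ is generated in the single degree $\ell+1$ and $M\cdot MQ^\ell$ lives in degrees $\ge\ell+2$, the minimal number of generators is exactly $\dim_k(MQ^\ell)_{\ell+1}$.

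This is, however, a genuinely different route from the paper's. The paper proceeds by induction on $n$: it splits $M=M'+\mathfrak q$ with $\mathfrak q=(x_{1n},\dots,x_{mn})$, proves directly that $\mu_R(MQ^\ell)=\mu_R(M'Q^\ell)+\mu_R(\mathfrak qQ^\ell)$ by passing to the determinantal ring on the first $n-1$ columns, and then identifies $\mu_R(\mathfrak qQ^\ell)$ with $\mu_R(Q^{\ell+1})=\binom{m+\ell}{\ell+1}$ via the relations $x_{i1}x_{jn}=x_{in}x_{j1}$. Your approach replaces this column-by-column induction with a single global observation about the Segre product, which makes the count essentially immediate and conceptually transparent. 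The paper's argument, on the other hand, stays entirely inside the presentation $R=k[X]/\mathrm I_2(X)$ and does not invoke the Segre description, which keeps it self-contained with respect to the rest of the paper. Either method is perfectly adequate here; yours is shorter.
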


\begin{proof}
We may assume $\ell >0$. If $n=2$, then $m=n$ and $x_{11}x_{22}=x_{12}x_{21}$ in $R$. Thus $Q^{\ell} = (x_{11}^{\ell}, x_{11}^{\ell-1}x_{21}, \ldots, x_{11}x_{21}^{\ell-1}, x_{21}^{\ell})$, because $\mu_R(Q)=2$. Therefore $\mu_R(M Q^{\ell}) = 2 \ell +4 = 2(\ell + 2)$. Suppose that $n >2$ and the assertion holds for $n-1$. Put 
$$
M' = (x_{ij}\mid 1 \le i \le m, ~ 1 \le j \le n-1) \ \ \text{and} \ \ \q = (x_{in}\mid 1 \le i \le m).
$$ 
Then $M = M' + \q$ and we have the following.

\begin{claim}\label{claim1}
$\mu_R(M Q^{\ell}) = \mu_R(M' Q^{\ell}) + \mu_R(\q Q^{\ell})$.
\end{claim}

\begin{proof}[Proof of Claim \ref{claim1}]
Let $X_1 = [X_{ij}]$ be the matrix obtained from $X$ by removing the $n$-th column. We put $R_1 = k[X_1]/{\rm I}_2(X_1)$. Then the composite map
$$
R_1 = k[X_1]/{\rm I}_2(X_1) \overset{\varphi}{\longrightarrow} R=k[X]/{\rm I}_2(X) \overset{\varepsilon}{\longrightarrow} k[X]/{{\rm I}_2(X) + \q} =:\overline{R}
$$
makes an isomorphism $R_1 \cong \overline{R}$. Set 
$$
M_1 = (x_{ij} \mid 1 \le i \le m, 1 \le j \le n-1) \ \ \text{and} \ \ Q_1 = (x_{i1} \mid 1 \le i \le m)
$$
inside of the ring $R_1$. We then have $\varphi(M_1 Q_1^{\ell}) = M' Q^{\ell}$. Let $\{f_{\alpha}\}$ (resp. $\{g_{\beta}\}$) be a homogeneous minimal system of generators for $M_1 Q_1^{\ell}$ (resp. $\q Q^{\ell}$). In order to see the homogeneous component of degree $\ell +1$,  $\{\varphi(f_{\alpha})\}$ and $\{g_{\beta}\}$ forms a minimal system of generators for $MQ^{\ell}$, so that $\mu_R(M Q^{\ell}) = \mu_R(M' Q^{\ell}) + \mu_R(\q Q^{\ell})$.
\end{proof}

Let $f, g \in Q^{\ell}$ be non-zero monomials in $x_{ij}$ which forms a part of a minimal basis of $Q^{\ell}$. Remember that $x_{i1}x_{jn} = x_{in}x_{j1}$ for every $1 \le i < j \le m$ and $R$ is an integral domain. Thus $x_{i1}f = x_{j1}g$ if and only if $x_{in}f = x_{jn}g$ for every $1 \le i < j \le m$, whence
$$
\mu_R(\q Q^{\ell}) = \mu_R(Q^{\ell+1}) = \binom{m+\ell}{\ell + 1}.
$$
Hence we have the equalities
\begin{eqnarray*}
\mu_R(M Q^{\ell}) &=& \mu_R(M' Q^{\ell}) + \mu_R(\q Q^{\ell}) \\
&=& \binom{m+\ell}{\ell + 1}(n-1) + \binom{m+\ell}{\ell +1} = \binom{m+\ell}{\ell + 1}\cdot n
\end{eqnarray*}
as wanted.
\end{proof}

Notice that the graded canonical module $\rmK_R$ of $R$ is given by the formula $\rmK_R = Q^{n-m}(-(t-1)m)$ (see \cite{BH2, BV}) and therefore we get the following corollary.

\begin{cor}\label{3.4}
Suppose that $t=2$. Then there is the equality
$$
\mu_R(M \rmK_R) = \binom{n}{n-m +1}\cdot n.
$$
\end{cor}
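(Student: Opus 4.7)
The plan is to simply combine two already-established facts: the explicit formula for the graded canonical module, and Lemma \ref{3.3}.

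First, I would recall that in Setting \ref{3.1} with $t = 2$, the graded canonical module is given by
\[
\rmK_R \;=\; Q^{n-m}(-m),
\]
as cited from \cite{BH2, BV}. The shift $(-m)$ is purely a grading adjustment: since minimal number of generators is insensitive to internal grading shifts, we have $\mu_R(M\rmK_R) = \mu_R(M Q^{n-m})$ as ungraded $R$-modules. Note that $n - m \ge 0$ by the standing assumption $m \le n$, so the exponent is a legitimate nonnegative integer and Lemma \ref{3.3} applies directly.

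Next, I would apply Lemma \ref{3.3} with $\ell = n - m$, which yields
\[
\mu_R(M Q^{n-m}) \;=\; \binom{m + (n-m)}{(n-m) + 1} \cdot n \;=\; \binom{n}{n-m+1} \cdot n.
\]
Combining with the identification in the previous paragraph gives the claimed formula $\mu_R(M\rmK_R) = \binom{n}{n-m+1}\cdot n$.

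There is no real obstacle here, so long as the grading shift is handled correctly and one verifies that $\ell = n-m$ lies in the range where Lemma \ref{3.3} has been proven (which it does, including the boundary case $\ell = 0$ corresponding to $m = n$, where the formula reads $\mu_R(M\rmK_R) = n$, consistent with $\rmK_R \cong R(-m)$ in the Gorenstein case). The only thing to be slightly careful about is that Lemma \ref{3.3} is stated for $\ell \ge 0$, and indeed its proof handles $\ell = 0$ trivially (then $MQ^0 = M$ with $\mu_R(M) = mn$, matching $\binom{m}{1}\cdot n$), so no additional argument is needed.
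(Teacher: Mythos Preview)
Your proposal is correct and follows exactly the paper's approach: the paper simply notes the formula $\rmK_R = Q^{n-m}(-(t-1)m)$ from \cite{BH2, BV} and deduces the corollary immediately from Lemma \ref{3.3} with $\ell = n-m$. Your additional remarks on the grading shift and the $\ell=0$ boundary case are fine but not strictly needed.
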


%%%%%%%%%%%%%%%%%%%%%%%%%%%%%%%%%%%%%%%%%%%%%%%%%%%%%%%%
\if0
Let us explore one example.

\begin{ex}\label{4.4}
We consider the case where $t=2$, $m=3$, and $n=5$. In this situation
$$
\rmK_R = Q^2 = (x_{11}, x_{21}, x_{31})^2 = (x_{11}^2, x_{21}^2, x_{31}^2, x_{11}x_{21}, x_{11}x_{31}, x_{21}x_{31}).
$$
Moreover we have 
$$
\mu_R(\m \rmK_R) = \binom{5}{3}\cdot 5 = 50.
$$
\end{ex}
\fi
%%%%%%%%%%%%%%%%%%%%%%%%%%%%%%%%%%%%%%%%%%%%%%%%%%%%%%%%

Let us note the following.

\begin{lem}\label{3.5}
There is the inequality
$$
(m^2 -2m + \ell +2)\cdot\binom{m+\ell-1}{\ell} > (\ell +1)\left(m^2+(\ell-2)m-(\ell-2)\right)
$$
for every $\ell \ge 1$, $m \ge 3$.
\end{lem}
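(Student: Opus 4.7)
The plan is to reduce this inequality---which mixes a binomial coefficient with quadratic polynomial expressions---to a purely polynomial inequality in $m$ and $\ell$. The right-hand side is a polynomial of total degree three in $m$ and $\ell$, while the factor $\binom{m+\ell-1}{\ell}$ on the left-hand side grows as a polynomial in $\ell$ of degree $m-1\ge 2$. Since $m\ge 3$, the left-hand side should dominate easily; the only task is to extract a clean lower bound for the binomial coefficient.

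The key observation is that
$$
\binom{m+\ell-1}{\ell} \;=\; \frac{m(m+1)\cdots(m+\ell-1)}{\ell!}
$$
is strictly increasing in $m$, and at $m=3$ it equals $\binom{\ell+2}{2}=(\ell+1)(\ell+2)/2$. Hence for all $m\ge 3$,
$$
\binom{m+\ell-1}{\ell}\;\ge\;\frac{(\ell+1)(\ell+2)}{2},
$$
with equality exactly at $m=3$. Because $m^2-2m+\ell+2>0$ under the hypotheses $m\ge 3,\ \ell\ge 1$, substituting this estimate into the left-hand side reduces the target inequality to
$$
(m^2-2m+\ell+2)(\ell+2)\;>\;2\bigl(m^2+(\ell-2)m-(\ell-2)\bigr).
$$

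The final step is to expand and factor the difference of the two sides of this polynomial inequality. A straightforward computation collapses everything to
$$
\text{LHS}-\text{RHS}\;=\;\ell\bigl((m-2)^{2}+\ell+2\bigr),
$$
which is manifestly positive for $m\ge 3,\ \ell\ge 1$ (the bracket is at least $4$). Chaining the two steps then yields the desired strict inequality for all admissible $m,\ell$, including the boundary case $m=3$ where the binomial estimate is already tight but the residual polynomial inequality still has strict slack. The only subtlety---and therefore the main thing to be careful about---is to use the lower bound at $m=3$ rather than a looser one, since taking a weaker bound on the binomial coefficient would destroy the strictness for $m=3$.
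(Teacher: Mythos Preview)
Your proof is correct. The binomial lower bound $\binom{m+\ell-1}{\ell}\ge\binom{\ell+2}{2}=(\ell+1)(\ell+2)/2$ for $m\ge 3$ follows from the strict monotonicity in $m$, and your algebraic simplification
\[
(m^2-2m+\ell+2)(\ell+2)-2\bigl(m^2+(\ell-2)m-(\ell-2)\bigr)=\ell\bigl((m-2)^2+\ell+2\bigr)
\]
checks out; the right-hand side is strictly positive for $m\ge 3$, $\ell\ge 1$, so the chained inequality is strict even at the boundary $m=3$ where the binomial estimate is tight.

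Your route differs from the paper's. The paper proceeds by induction on $\ell$: it verifies $\ell=1$ directly, and for the inductive step it shows that the increment of the left-hand side from $\ell-1$ to $\ell$ exceeds the increment of the right-hand side, using Pascal's rule and the crude bound $\binom{m+\ell-2}{\ell}\ge\binom{\ell+1}{\ell}=\ell+1$. By contrast, you fix $\ell$ and exploit monotonicity in $m$, reducing everything to a single polynomial identity. Your argument is shorter and avoids induction entirely; the paper's approach, on the other hand, never divides by $\ell+1$ and keeps the binomial coefficient in play throughout. Both hinge on the same kind of residual factorization---the expression $(m-2)^2+\ell+\text{const}$ appears in both---so the underlying reason the inequality holds is the same, but your packaging is cleaner.
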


\begin{proof}
By induction on $\ell$. It is obvious that the case where $\ell=1$. Suppose that $\ell >1$ and the assertion hold for $\ell -1$. Therefore by induction arguments, we have
$$
(m^2 -2m + \ell +1)\cdot\binom{m+\ell-2}{\ell-1} > \ell\cdot\left(m^2+(\ell-3)m-(\ell-3)\right). \quad \quad (**)
$$
To prove the inequality
$$
(m^2 -2m + \ell +2)\cdot\binom{m+\ell-1}{\ell} > (\ell +1)\left(m^2+(\ell-2)m-(\ell-2)\right) \quad \quad (*)
$$
it is enough to show that 
$$
(\text{LHS of }(*))-(\text{LHS of }(**)) > (\text{RHS of }(*))-(\text{RHS of }(**)). \quad \quad (\sharp)
$$
Indeed, we have 
\begin{eqnarray*}
&&(\text{LHS of }(\sharp))-(\text{RHS of }(\sharp)) \\
&=&  \binom{m+\ell -2}{\ell -1} + (m^2 -2m + \ell +2) \binom{m+\ell -2}{\ell} - (m^2 +(2\ell-2)m -(2\ell -2)) \\
&\ge& \binom{m+\ell -2}{\ell -1} + (m^2 -2m + \ell +2) \binom{\ell +1}{\ell} - (m^2 -2m +\ell +2 + (2m-3)\ell) \\ 
&=& \binom{m+\ell -2}{\ell -1} + \ell\cdot ((m-2)^2+ \ell + 1) >0
\end{eqnarray*}
where the second inequality follows from the assumption $m\ge3$.
\end{proof}

We are now in a position to prove Theorem \ref{3.2}.

\begin{proof}[Proof of Theorem \ref{3.2}]
We consider the positive integer $\ell = n-m$. Since $A=R_M$ is an almost Gorenstein local ring, there exists an exact sequence
$$
0 \to A \to \rmK_A \to C \to 0
$$
of $A$-modules such that $C$ is an Ulrich $A$-module, where $\m = MR_M$. Thanks to Corollary \ref{2.3}, we get
$$
0 \to \m \to \m \rmK_A \to \m C \to 0
$$
because $A$ is not a Gorenstein ring, so that 
\begin{eqnarray*}
\mu_A(\m \rmK_A) &\le& \mu_A(\m) + \mu_A(\m C) \\
&\le& m(m+\ell) + (d-1) (\rmr(A)-1)
\end{eqnarray*}
where $d = \dim A = 2m + \ell -1$ and $\rmr(A)$ denotes the Cohen-Macaulay type of $A$. Since $\rmK_A=(x_{i1}\mid 1 \le i \le m)^{\ell}$, we see that $\rmr(A) = \binom{m+\ell -1}{\ell}$. Therefore by Corollary \ref{3.4}
\begin{eqnarray*}
\binom{m+\ell}{\ell+1}\cdot (m+\ell) 
&\le& m(m+\ell) + (2m + \ell -2)\cdot\left(\binom{m+\ell-1}{\ell}-1\right) \\
&=& (m^2 + (\ell -2)m -(\ell -2)) + (2m + \ell -2)\cdot\binom{m+\ell-1}{\ell}
\end{eqnarray*}
which implies 
$$
\binom{m+\ell-1}{\ell+1}\cdot (m+\ell) =  (m^2 + (\ell -2)m -(\ell -2)) + (m-2) \cdot\binom{m+\ell -1}{\ell}
$$
by using the formula $\binom{m+\ell}{\ell+1} = \binom{m+\ell -1}{\ell + 1}+ \binom{m+\ell-1}{\ell}$.
Moreover since $\binom{m+\ell - 1}{\ell+1} = \binom{m+\ell -1}{\ell}\frac{m-1}{\ell + 1}$, we obtain the inequality
$$
(m^2 -2m + \ell +2)\cdot\binom{m+\ell-1}{\ell} \le (\ell +1)\left(m^2+(\ell-2)m-(\ell-2)\right)
$$
and hence $m=2$ by Lemma \ref{3.5}.
\end{proof}
%%%%%%%%%%%%%%%%%%%%%%%%%%%%%%%%%%%%%%%

%%%%%%%%%%%%%%%%%%%%%%%%%%%%%%%%%%%%%%%%%%%%%%%%%%%%%%%%%%%%%%%%%%%%%%%%%%%%%%%%%%%%%%%%%%%%%%%%%%%%%%%%%%%%%%%%%%%%%%%%%%%%%%%%%%%%%%%%%%%%%%%%%%%%%%%%%%%%%

\section{Survey on the resolution of determinantal rings}

In this section we deal with the resolution of determinantal rings in order to determine the lower bound of the number of generators for $M\rmK_R$, which plays an important role of the proof of Theorem \ref{main}. Let us now trace back to a history, if $t=1$, then Koszul complex of $x_{ij}$ gives the resolution of determinantal rings. In 1962, J. A. Eagon and D. G. Northcott (\cite{EN}) constructed such resolution in the case where the determinantal ideals ${\rm I}_t(X)$ are generated by the maximal minors of $X$. It is also known by T. H. Gulliksen and O. G. Neg\r{a}rd (\cite{GN}) when $t=m-1$, $m=n$. Finally, in 1978, A. Lascoux (\cite{L}) found the minimal free resolution of determinantal ring for arbtrary $t$, $m$, and $n$ if the base ring contains the field of rational numbers $\Bbb Q$. Besides this the resolution has been discovered in different ways by P. Roberts (\cite{R}). He gave more down-to-earth construction of the Lascoux's resolution, so we adopt his approach. In this section we will show a brief survey on the construction of the resolution given by P. Roberts. In particular, let us concentrate on how to get the ranks of the pieces of the resolution.
%Since unfortunately his paper has not been published, for the convenience for the readers, let us give a brief survey on the construction given by Roberts.

In what follows, let $t \ge 1$ and $m \ge n \ge 1$ be integers.
We fix a Noetherian local ring $(S, \n)$ which contains a field of rational numbers $\Bbb Q$. Let $F$, $G$ be free $S$-modules with $\rank_S F=m+t-1$ and $\rank_S G=n+t-1$, respectively. Let $\phi =(r_{ij}) : F \to G$ be a $S$-linear map such that $r_{ij} \in \n$. Let $\lambda(m, n)$ denote the Young tableau which consists of $n$ rows and $m$ squares, where the $i$-th row contains the numbers from $(i-1)m + 1$ to $im$ in increasing order, namely
$$\hspace{-3em}
\lambda(m, n) \ \ = \ \ 
 \ytableausetup{mathmode, boxsize=2.2em}
 \begin{ytableau}
  1 & 2 &  \none[\dots] & \scriptstyle m-1 & m \\
\scriptstyle  m+1 & \scriptstyle  m+2 &  \none[\dots] & \scriptstyle 2m-1 & 2m \\
  \none[\vdots] & \none[\vdots] & \none[\ddots] & \none[\vdots] & \none[\vdots] \\
   &  &  \none[\dots] &  & mn
 \end{ytableau}
$$

\vspace{1em}

Let $k$ be an integer such that $0 \le k \le mn$. Take a partition $\lambda = (\lambda_1, \lambda_2, \ldots, \lambda_n)$ of $k$ so that $\lambda_1 \ge \lambda_2 \ge \cdots \ge \lambda_n$, $\sum_{i=1}^n\lambda_i = k$ and $\lambda_i \le m$ for each $1 \le i \le n$.
Let $c_{\lambda}$ denote the Young symmetrizer, that is 
$$
c_{\lambda} = \left(\sum_{\sigma \in P_{\lambda}}\sigma \right)\cdot \left(\sum_{\sigma \in Q_{\lambda}}\operatorname{sgn}(\sigma) \sigma \right)
$$
in the group algebra $\Bbb Q[S_k]$ of the symmetric group $S_k$, where 
$$
P_{\lambda} = \{ \sigma \in S_k \mid \sigma \ \text{preserves each row}\} \ \text{and}  \ 
Q_{\lambda} = \{ \sigma \in S_k \mid \sigma \ \text{preserves each column}\}.
$$
Then thanks to \cite[Theorem IV. 3.1]{B}, there exists $k_{\lambda} \in \Bbb Q$ such that $k_{\lambda} \ne 0$ and $c_{\lambda}^2 = k_{\lambda}c_{\lambda}$. Let
$$
e(\lambda) = \frac{1}{k_{\lambda}}c_{\lambda}
$$
be the idempotent element of $\Bbb Q[S_k]$. Since $S$ contains the field of rational numbers $\Bbb Q$, the group algebra $\Bbb Q[S_k]$ acts on the tensor product $F^{\otimes k} = F \otimes_S F \otimes_S \cdots \otimes_S F$ of modules. Hence the image of homothety map by $e(\lambda)$ forms a free $S$-submodule of $F^{\otimes k}$ which is denoted by $e(\lambda)F$.

We will define the Young tableaux $\lambda_F$, $\lambda_G$ derived from $\lambda$ as follows. The $i$-th column of $\lambda_F$ consists of $\lambda_i$ squares which contain the numbers of the $(n-i +1)$-th row of $\lambda(m,n)$ in reverse order. Let $\lambda_G$ be the tableau derived from $\lambda(m, n)$ by removing the numbers of $\lambda_F$. Here we now associate to each square of $\lambda(m, n)$ either a square or a set of $t$ squares of $\lambda(m, n+t-1)$. The square in the $(i, j)$ position of $\lambda(m, n)$ corresponds to the following.
\begin{itemize}
\item[$(1)$] The square in the $(i, j)$ position of $\lambda(m, n+t-1)$ if $j-i > m-n$.
\item[$(2)$] The set of $t$ squares from the $(i, j)$ position to the $(i + t-1, j)$ position if $j-i=m-n$.
\item[$(3)$] The square in the $(i+t-1, j)$ position if $j-i < m-n$.
\end{itemize}

Let $\lambda_F(t)$ be the tableau defined by replacing each square of $\lambda_F$ by the corresponding square or the set of $t$ squares of $\lambda(m, n+t-1)$. We consider the tableaux $\lambda_G(t)$ which is obtained from $\lambda(m, n+t-1)$ by removing the squares of $\lambda_F(t)$.

\if0
Let us explore one example.

%%%%%%%%%%%%%%%%%%%%%%%%%%%%%%%%%%%%%%%%
\begin{ex}
Let's take a partition $\lambda=(3, 2, 0)$ of $5$ of length $3$ ($m=4$, $n=3$, $k=5$). Then the first column of $\lambda_F$ consists of $3$ squares, because $\lambda_1=3$. The second column consists of $2$ squares, and we do not need to add the squares in the third column, since $\lambda_3=0$. Next we put the numbers in each square of $\lambda_F$. The first column of $\lambda_F$ corresponds to the bottom row of $\lambda(m, n)$, so that we put $12$, $11$, and $10$ in the first column. Similarly the second column of $\lambda_F$ corresponds to the second row of $\lambda(m, n)$, so we put $8$ and $7$ in the second column. 
Let $\lambda_G$ be the tableau obtained from $\lambda(m, n)$ by deleting the numbers of $\lambda_F$. Therefore $\lambda_G$ has this kind of forms.
 
$$
\lambda(m, n) =
\ytableausetup{mathmode, boxsize=1.5em}
 \begin{ytableau}
1 & 2 & 3 & 4 \\
5 & 6 & 7 & 8 \\
9 & 10 & 11 & 12
 \end{ytableau}
\  , \ \ 
 \lambda_F=
 \begin{ytableau}
12 & 8 \\
11 & 7 \\
10 
 \end{ytableau}
 \  , \ \ \text{and} \ \ 
 \lambda_G=
 \begin{ytableau}
1 & 2 & 3 & 4 \\
5 & 6 \\
9 
 \end{ytableau}
 $$

Firstly we consider the blue part of $\lambda(m, n)$. The square of the blue part corresponds to the same square of $\lambda(m, n+t-1)$. As for the red part, we associate to the red square of $\lambda(m, n)$ the set of $3$ squares of $\lambda(m, n+t-1)$. Finally we take care of the green part. The green part of $\lambda(m, n)$ corresponds to the square of $\lambda(m, n+t-1)$.
This is the correspondence between these Young tableaux.

$$ 
\lambda(m, n+t-1) =
\ytableausetup{mathmode, boxsize=1.5em}
 \begin{ytableau}
1 &  *(red) 2 & *(cyan) 3 & *(cyan) 4 \\
5 & *(red) 6 & *(red) 7 & *(cyan) 8 \\
*(green) 9 & *(red) 10 & *(red)11 & *(red) 12 \\
*(green) 13 &*(green) 14 & *(red) 15 & *(red) 16 \\
*(green) 17 &*(green) 18 &*(green) 19 &  *(red) 20
 \end{ytableau}
 \ , \ \
\lambda(m, n) =
\ytableausetup{mathmode, boxsize=1.5em}
 \begin{ytableau}
*(green) 1 & *(red) 2 & *(cyan) 3 & *(cyan) 4 \\
*(green) 5 &*(green) 6 & *(red) 7 & *(cyan) 8 \\
*(green) 9 &*(green) 10 & *(green) 11 & *(red) 12
 \end{ytableau}\ .  
 $$

Remember that the tableau $\lambda_F$ has this kind of forms. Then $\lambda_F(t)$ is given by replacing the square of $\lambda_F$ by the corresponding square or a set of squares. In this example, $12$ and $7$ correspond to the string of the squares and the other numbers, say $8$, $11$, and $10$, correspond to the square. 
Let $\lambda_G(t)$ be the tableau obtained from $\lambda(m, n+t-1)$ by deleting the squares of $\lambda_F(t)$. 
$$
  \lambda_F=
  \begin{ytableau}
*(red) 12 & *(cyan) 8 \\
*(green) 11 & *(red) 7 \\
*(green) 10 
 \end{ytableau}
  \ , \ \ \text{so that} \ \
 \lambda_F(t)=
\begin{ytableau}
*(red) 12 & *(cyan) 8 \\
*(red) 16 & *(red) 7 \\
*(red) 20 & *(red) 11 \\
*(green) 19 & *(red) 15 \\
*(green) 18 
 \end{ytableau} \ . \ \ \text{Therefore} \ \ 
 \lambda_G(t)=
\begin{ytableau}
1 & *(red) 2 & *(cyan) 3 & *(cyan) 4 \\
5 & *(red) 6 \\
*(green) 9 & *(red) 10 \\
*(green) 13 & *(green)14 \\
*(green)17 
 \end{ytableau}\ .
$$
\end{ex}
%%%%%%%%%%%%%%%%%%%%%%%%%%%%%%%%%%%%%

\fi

%With the above notation, we define the free $S$-module which makes a minimal free resolution of the determinantal ring.

\begin{defn}
We define
$$
C_k=C_k(t) = \sum_{|\lambda|= k}e(\lambda_F(t))F \otimes_S e(\lambda_G(t))G
$$
for every $0 \le k \le mn$, where $|\lambda| =k$ stands for the partition $\lambda = (\lambda_1, \lambda_2, \ldots, \lambda_n)$ of $k$ such that $\lambda_1 \ge \lambda_2 \ge \cdots \ge \lambda_n$, $\sum_{i=1}^n\lambda_i = k$ and $\lambda_i \le m$ for every $1 \le i \le n$.
\end{defn}

With the above notation the module $C_k$ and the suitable boundary maps (see \cite{R}) make a minimal $S$-free resolution of $S/{\rm I}_t(\phi)$:
$$
0 \to C_{mn} \to C_{mn-1} \to \cdots \to C_1 \to C_0 \to S/{\rm I}_t(\phi) \to 0.
$$

From now on we will compute the rank of the free module $C_k$. The strategy for the computation is the following. Firstly we determine all partitions $\lambda=(\lambda_1, \lambda_2, \ldots, \lambda_n)$ of $k$ with $\sum_{i=1}^n\lambda_i=k$ and $0 \le \lambda_i \le m$. We then have to find the corresponding Young diagrams $\lambda_F(t)$, $\lambda_G(t)$ and compute the ranks of free modules $e(\lambda_F(t))F$, $e(\lambda_G(t))G$.

As for the last step, we apply the following formula given by H. Boerner (\cite[Theorem VI. 1.3]{B}).  More precisely let $\lambda=(\lambda_1, \lambda_2, \ldots, \lambda_r)$ be a partition such that $\lambda_1 \ge \lambda_2 \ge \cdots \lambda_r$, $H$ a free $S$-module of rank $r \ge 0$. 
We denote by
$$
\Delta(x_1, x_2, \ldots, x_r) = \prod_{i<j} (x_i - x_j)
$$
the difference products of integers $x_1, x_2, \ldots, x_r$. Then the ranks of $e(\lambda)H$ is given by the formula
$$
\rank_S e(\lambda)H = \frac{\Delta(\ell_1, \ell_2, \ldots, \ell_r)}{\Delta(r-1, r-2, \ldots, 0)}
$$
where $\ell_i = \lambda_i + r -i$ for each $1 \le i \le r$.

The remainder of this section is devoted to compute the ranks of $C_{mn}$ and $C_{mn-1}$. Let us begin with the following, which corresponds to the Cohen-Macaulay type of the determinantal ring.

\begin{prop}\label{4.3}
There is the equality
$$
{\footnotesize
\rank_S C_{mn} = \frac{\displaystyle\prod_{j=0}^{m-n-1}\left(\prod_{i=0}^{n-1}(t+i+j)\right)1!\cdot 2! \cdots (n-2)!\cdot(n-1)!}{(m-n)!\cdot (m-n+1)! \cdots (m-2)! \cdot (m-1)!}}.
$$
\end{prop}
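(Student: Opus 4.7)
The plan is to exploit the extreme constraint $|\lambda|=mn$ to reduce $C_{mn}$ to a single summand, identify the two shapes $\lambda_F(t),\lambda_G(t)$, and apply Boerner's rank formula with subsequent combinatorial simplification.

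Since $|\lambda|=mn$ with $\lambda_i\le m$ forces $\lambda=(m,m,\ldots,m)$, we have
\[
C_{mn}=e(\lambda_F(t))F\otimes_S e(\lambda_G(t))G,
\]
a single tensor product. I will identify the two shapes by tracing the substitution rules column by column on $\lambda=(m^n)$, whose tableau $\lambda_F$ covers every cell of $\lambda(m,n)$. A cell $(a,b)$ with $b\le m-n$ lies strictly below the diagonal $b-a=m-n$ and is shifted to $(a+t-1,b)$; for $b\ge m-n+1$ the cells above the diagonal remain in place, the unique diagonal cell expands into a string of $t$ vertical cells, and the cells below are shifted, so that the entire column of $\lambda(m,n+t-1)$ is filled. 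Consequently $\lambda_G(t)=((m-n)^{t-1})$ is the top-left $(t-1)\times(m-n)$ rectangle, and $\lambda_F(t)$ is its complementary shape inside $\lambda(m,n+t-1)$.

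Next I apply Boerner's formula to each factor. For $\lambda_G(t)$ with $\rank_S G=n+t-1$, the sequence $\ell_i=(\lambda_G(t))_i+(n+t-1)-i$ splits into two descending runs of consecutive integers of lengths $t-1$ and $n$, namely $(m+t-2,\ldots,m)$ and $(n-1,\ldots,0)$; for $\lambda_F(t)$ with $\rank_S F=m+t-1$ the $\ell$-sequence splits into three analogous runs. In each case $\Delta(\ell_1,\ldots)$ factors as the product of within-run superfactorials $\prod_{k=0}^{s-1}k!$ together with cross-run double products. Multiplying the two ranks and cancelling the within-run superfactorials against large portions of the respective denominators $\prod_{k=0}^{m+t-2}k!$ and $\prod_{k=0}^{n+t-2}k!$ leaves a product of cross-run factors divided by a residual ratio of factorials.

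The last step is to reorganise these survivors into the claimed closed form. A reindexing of the form $j\mapsto m-n-1-j$ converts the surviving cross-run factors into the double product $\prod_{j=0}^{m-n-1}\prod_{i=0}^{n-1}(t+i+j)$ in the numerator, while the remaining factorials consolidate into $1!\,2!\cdots(n-1)!$ in the numerator and $(m-n)!\,(m-n+1)!\cdots(m-1)!$ in the denominator. I expect the main obstacle to be precisely this bookkeeping: every surviving Vandermonde factor must be paired with the correct factorial in the denominator, and the reindexings that produce the claimed double product must be carried out explicitly. No ideas beyond Boerner's formula and basic manipulation of factorials are needed, but the matching is intricate.
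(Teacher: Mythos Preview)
Your identification of the unique partition $\lambda=(m^n)$ and of $\lambda_G(t)=((m-n)^{t-1})$ is correct, and your analysis of where the cells of $\lambda(m,n)$ land in $\lambda(m,n+t-1)$ is accurate. The gap is in the shape of $\lambda_F(t)$. It is true that the \emph{entries} of $\lambda_F(t)$ and $\lambda_G(t)$ together exhaust $\lambda(m,n+t-1)$, but the \emph{shape} of $\lambda_F(t)$ that enters Boerner's formula is not that complement: it is produced from the shape of $\lambda_F$ itself by the column-by-column substitution rule. Since each of the $n$ columns of $\lambda_F$ (each of length $m$) contains exactly one entry coming from the diagonal $b-a=m-n$, each column grows by $t-1$ cells, and $\lambda_F(t)$ is the full rectangle $(n^{m+t-1})$. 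For this shape the sequence $\ell_i=n+(m+t-1)-i$ is a \emph{single} run of consecutive integers, whence $\rank_S e(\lambda_F(t))F=1$. Your assertion that the $\ell$-sequence for $\lambda_F(t)$ ``splits into three analogous runs'' shows you are feeding a different, non-rectangular partition into the formula, and the product of the two ranks you would then compute is not the stated expression.

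Once one sees that $\rank_S e(\lambda_F(t))F=1$, no multiplication or cross-cancellation between the $F$- and $G$-factors is needed: the whole computation collapses to the single factor $\rank_S e(\lambda_G(t))G$, for which your two-run description of the $\ell$-sequence is exactly right and yields the claimed formula directly. This is precisely the route taken in the paper.
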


\begin{proof}
Let $\lambda = (\lambda_1, \lambda_2, \ldots, \lambda_n)$ be a partition of $mn$ such that $\lambda_1 \ge \lambda_2 \ge \cdots \ge \lambda_n$, $\sum_{i=1}^{n}\lambda_i = mn$, and $\lambda_i \le m$ for each $1 \le i \le n$. Then $\lambda_i = m$ for every $1 \le i \le n$. Notice that the Young tableau $\lambda_F$ has the form
$$
\lambda_F \ \ = \ \ 
 \ytableausetup{mathmode, boxsize=1.5em}
 \begin{ytableau}
*(black)   &  &  \none[\dots] &   \\
 & *(black)  &  \none[\dots] &  \\
  \none[\vdots] & \none[\vdots] & \none[\ddots] & \none[\vdots]  \\
   &  &  \none[\dots] &  *(black)  \\
      &  &  \none[\dots] &   \\
  \none[\vdots] & \none[\vdots] & \none & \none[\vdots]  \\ 
   &  &  \none[\dots] &   
 \end{ytableau}
$$
where the filled square corresponds to the string of $t$ squares in $\lambda(m, n+t-1)$. Therefore $\lambda_F(t)$ (resp. $\lambda_G(t)$) is the Young diagram which consists of $m+t-1$ (resp. $t-1$) rows and $n$ (resp. $m-n$) columns. Thus 
{\footnotesize
\begin{eqnarray*}
\rank_Se(\lambda_F(t))F &=& \frac{\Delta(m+t+n-2, m+t+n-3, \ldots, m+t-1, m+t-2, \ldots, n+1, n)}{\Delta(m+t-2, m+t-3, \ldots, n, n-1, \ldots, 1, 0)} \\
&=& 1
\end{eqnarray*}}
because $\rank_SF = m+t-1$. Hence $\rank_S C_{mn} = \rank_Se(\lambda_G(t))G$. 

On the other hand, since $\rank_S G=n+t-1$, we get
{\footnotesize
\begin{eqnarray*}
\rank_Se(\lambda_G(t))G &=& \frac{\Delta(m+t-2, m+t-3, \ldots, n+t-1, n+t-2, \ldots, m, n-1, \ldots, 1, 0)}{\Delta(n+t-2, n+t-3, \ldots, m, m-1, \ldots, n, n-1, \ldots, 1, 0)} \\
&=& \frac{\displaystyle\prod_{j=0}^{m-n-1}\left(\prod_{i=0}^{n-1}(t+i+j)\right)1!\cdot 2! \cdots (n-2)!\cdot(n-1)!}{(m-n)!\cdot (m-n+1)! \cdots (m-2)! \cdot (m-1)!}
\end{eqnarray*}}
as desired.
\end{proof}

Closing this section let us compute the rank of $C_{mn-1}$ in the case where $m\ne n$.

\begin{prop}\label{4.4}
Suppose that $m\ne n$. Then there is the equality
{\footnotesize
$$
\rank_S C_{mn-1} = \frac{\displaystyle\prod_{j=0}^{m-n-1}\left(\prod_{i=1}^{n-1}(t+i+j)\right)\prod_{i=0}^{m-n-2}(t+i)\left(t+m-1\right)1!\cdot 2! \cdots (n-2)!\cdot n!}{(m-n-1)!\cdot (m-n+1)!\cdot(m-n+2)! \cdots (m-2)! \cdot (m-1)!}. 
$$
}
\end{prop}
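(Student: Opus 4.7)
Following the template of Proposition 4.3, I would first classify the partitions $\lambda=(\lambda_1,\ldots,\lambda_n)$ of $mn-1$ satisfying $\lambda_1\ge\cdots\ge\lambda_n$ and $\lambda_i\le m$. Whenever $n\ge 2$, we must have $\lambda_1=m$ (otherwise $\sum\lambda_i\le n(m-1)=mn-n<mn-1$), and a descending induction forces every further part (other than the last) to equal $m$ as well; hence the unique such partition is
$$
\lambda=(\underbrace{m,m,\ldots,m}_{n-1},\,m-1),
$$
so the sum defining $C_{mn-1}$ reduces to a single term. For this $\lambda$, the tableau $\lambda_F$ contains every square of $\lambda(m,n)$ except $(1,1)$, so $\lambda_G$ is the single square $(1,1)$.

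Next I would determine the shapes of $\lambda_F(t)$ and $\lambda_G(t)$ via the correspondence $\lambda(m,n)\rightsquigarrow\lambda(m,n+t-1)$. Each row of $\lambda(m,n)$ meets the diagonal $\{j-i=m-n\}$ in exactly one square (Case $(2)$, expanding to $t$ cells) and otherwise contributes one cell per square (Cases $(1)$ and $(3)$). Hence rows $2,\ldots,n$ each contribute $m+t-1$ cells to $\lambda_F(t)$, while row $1$ contributes only $m+t-2$ cells (the corner $(1,1)$ is deleted, but the diagonal cell $(1,m-n+1)$ survives since $m>n$), so $\lambda_F(t)$ has partition shape
$$
\mu:=(\underbrace{n,n,\ldots,n}_{m+t-2},\,n-1).
$$
The always-uncovered region of $\lambda(m,n+t-1)$ consists of rows $1,\ldots,t-1$ of columns $1,\ldots,m-n$; removing $(1,1)$ additionally frees its Case $(3)$ image $(t,1)$. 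Therefore $\lambda_G(t)$ has column heights $t,t-1,\ldots,t-1$ (one $t$ and $m-n-1$ copies of $t-1$), corresponding to the partition
$$
\nu:=(\underbrace{m-n,m-n,\ldots,m-n}_{t-1},\,1).
$$

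I would then apply the Boerner rank formula to each factor of $\rank_S C_{mn-1}=\rank_S e(\mu)F\cdot\rank_S e(\nu)G$. For $\mu$ with $r=\rank_S F=m+t-1$, the sequence $(\ell_i)=(\mu_i+r-i)$ consists of the consecutive integers from $n-1$ up to $n+m+t-2$ with only $n$ missing, and a direct Vandermonde calculation yields $\rank_S e(\mu)F=m+t-1$, which is precisely the factor $(t+m-1)$ in the stated numerator. For $\nu$ with $r=\rank_S G=n+t-1$, the $\ell$-set is
$$
L=\{0,1,\ldots,n-2\}\cup\{n\}\cup\{m,m+1,\ldots,m+t-2\},
$$
i.e., $\{0,1,\ldots,m+t-2\}$ with the $(m-n)$-element set $\{n-1,n+1,n+2,\ldots,m-1\}$ removed. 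Splitting $\Delta(L)$ into three internal Vandermondes on these blocks together with three pairwise cross products expresses everything in closed form as products of consecutive integers.

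The main obstacle is the final algebraic reorganization: after dividing $(m+t-1)\Delta(L)$ by $\Delta(n+t-2,\ldots,0)=\prod_{k=1}^{n+t-2}k!$, one must rewrite the resulting ratio in the precise shape of the proposition. I expect the three cross products to produce the double product $\prod_{j=0}^{m-n-1}\prod_{i=1}^{n-1}(t+i+j)$, the single product $\prod_{i=0}^{m-n-2}(t+i)$, and the factorial block $1!\cdot 2!\cdots(n-2)!\cdot n!$ in the numerator, while the internal Vandermondes $\Delta(\{0,\ldots,n-2\})$ and $\Delta(\{m,\ldots,m+t-2\})$ together with the denominator $\prod_{k=1}^{n+t-2}k!$ collapse onto the denominator $(m-n-1)!(m-n+1)!\cdots(m-1)!$. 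The matching is elementary but bookkeeping-intensive, and accounts for the bulk of the remaining work.
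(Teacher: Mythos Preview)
Your proposal is correct and follows essentially the same route as the paper's own proof: identify the unique admissible partition $\lambda=(m,\ldots,m,m-1)$, read off the shapes $\lambda_F(t)$ and $\lambda_G(t)$, and apply Boerner's formula to each factor. In fact you supply more detail than the paper does---explicitly writing the partitions $\mu$ and $\nu$ and describing the $\ell$-sets---whereas the paper simply displays the two diagrams and records the resulting Vandermonde quotients without spelling out the intermediate bookkeeping.
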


\begin{proof}
Similarly as in the proof of Proposition \ref{4.3},  let us choose a partition $\lambda = (\lambda_1, \lambda_2, \ldots, \lambda_n)$ of $mn-1$ such that $\lambda_1 \ge \lambda_2 \ge \cdots \ge \lambda_n$, $\sum_{i=1}^{n}\lambda_i = mn-1$, and $\lambda_i \le m$ for each $1 \le i \le n$. We then have $\lambda_i = m$ for every $1 \le i \le n-1$ and $\lambda_n=m-1$. Therefore since $m\ne n$, we get
$$
\lambda_F(t) \ \ = \ \ 
 \ytableausetup{mathmode, boxsize=1.5em}
 \begin{ytableau}
{}   & \none[\dots] &  \none[\dots] &   \\
 & \none[\dots] &  \none[\dots] &  \\
  \none[\vdots] & \none[\ddots] & \none & \none[\vdots]  \\ 
  &  \none[\dots] &   &   \\
   &  \none[\dots] &   
 \end{ytableau}\   \ \ \ \text{and} \ \ \
 \lambda_G(t) \ \ = \ \ 
 \ytableausetup{mathmode, boxsize=1.5em}
 \begin{ytableau}
{}   & \none[\dots] &  \none[\dots] &   \\
 & \none[\dots] &  \none[\dots] &  \\
  \none[\vdots] & \none[\ddots] & \none & \none[\vdots]  \\ 
  &   & \none[\dots]  &   \\
   &  \none &    \none
 \end{ytableau}
$$
where $\lambda_F(t)$ (resp. $\lambda_G(t)$) is the tableau which consists of $m+t-1$ (resp. $t$) rows and $n$ (resp. $m-n$) columns. Hence
{\footnotesize
\begin{eqnarray*}
\rank_Se(\lambda_F(t))F &=& \frac{\Delta(m+t+n-2, m+t+n-3, \ldots, m+t-1, m+t-2, \ldots, n+1, n-1)}{\Delta(m+t-2, m+t-3, \ldots, n+1, n, \ldots, 1, 0)} \\
&=& t+ m-1.
\end{eqnarray*}}
Moreover
{\footnotesize
\begin{eqnarray*}
\rank_Se(\lambda_G(t))G &=& \frac{\Delta(m+t-2, m+t-3, \ldots, n+t-1, n+t-2, \ldots, m, n, n-2, \ldots, 1, 0)}{\Delta(n+t-2, n+t-3, \ldots, m, m-1, \ldots, n, n-1, \ldots, 1, 0)} \\
&=& \frac{\displaystyle\prod_{j=0}^{m-n-1}\left(\prod_{i=1}^{n-1}(t+i+j)\right)\prod_{i=0}^{m-n-2}(t+i)\cdot1!\cdot 2! \cdots (n-2)!\cdot(n-1)!}{(m-n)!\cdot (m-n+1)! \cdots (m-2)! \cdot (m-1)!}
\end{eqnarray*}}
and hence we get the required equality.
\end{proof}

%%%%%%%%%%%%%%%%%%%%%%%%%%%%%%%%%%%%%%%%%%%%%%%%%%%%%%%%%%%%%%%%%%%%%%
%%%%%%%%%%%%%%%%%%%%%%%%%%%%%%%

\section{Proof of Theorem \ref{main}}

In this section we maintain the notation as in Setting \ref{3.1}. We begin with the following lemma. 

\begin{lem}\label{5.1}
$R=S/{\rm I}_t(X)$ is an almost Gorenstein graded ring if and only if either $m=n$, or $m \ne n$ and $m=t=2$.
\end{lem}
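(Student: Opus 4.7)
The plan is to prove the two implications of the equivalence separately.

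\emph{Sufficiency.} Assume either $m = n$, or $m \ne n$ with $m = t = 2$. If $m = n$, then $R$ is Gorenstein by Svanes' theorem, and hence is trivially an almost Gorenstein graded ring. Suppose now $m = t = 2$ and $m \ne n$. Then $R$ has minimal multiplicity (with $\dim R = n + 1$, embedding dimension $2n$, and $\rme(R) = n$), and $\rmK_R = Q^{n-2}(-2)$ where $Q = (x_{11}, x_{21})R$, so $\rma(R) = -n$ and $\rmK_R(-\rma(R))$ is minimally generated in degree $0$ by the classes of $x_{11}^{i} x_{21}^{n-2-i}$ for $0 \le i \le n-2$. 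I plan to define $\varphi \colon R \to \rmK_R(-\rma(R))$ by sending $1$ to the class of $x_{11}^{n-2}$. Since $R$ is a domain, $\varphi$ is injective, and the cokernel $C$ satisfies $\mu_R(C) = n - 2 = \rmr(R) - 1$. The Ulrich condition $\rme^0_\fkm(C) = \mu_R(C)$ is then verified by exhibiting a system of $d - 1$ parameters of $\overline{R} = R/\ann_R(C)$ whose multiplication on $C$ generates $\fkm C$, using the minimal-multiplicity structure of $R$.

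\emph{Necessity.} Suppose $R$ is AG graded and $m \ne n$; we must show $m = t = 2$. The hypothesis forces $R_M$ to be AG local, and since $m \ne n$ the ring $R$ is not Gorenstein, so the Ulrich-cokernel framework of Section~2 applies. If $t = 2$, then Theorem~\ref{3.2} yields $m = 2$, hence $m = t = 2$. Assume therefore $t \ge 3$; I will derive a contradiction. Corollary~\ref{2.3} supplies the upper bound
\[
\mu_R(M \rmK_R) \le mn + (d - 1)(\rmr(R) - 1), \qquad d = mn - (m - t + 1)(n - t + 1).
\]
Dualizing the Lascoux--Roberts minimal $S$-free resolution of $R$ (and shifting appropriately by $\omega_S$) produces a minimal graded $S$-presentation
\[
C_{mn-1}^{\ast} \to C_{mn}^{\ast} \to \rmK_R \to 0
\]
with $S$-linear boundary map. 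Applying Proposition~\ref{2.4} together with $\rmr(R) = \rank_S C_{mn}$ (Proposition~\ref{4.3}) yields the lower bound
\[
\mu_R(M \rmK_R) \ge mn \cdot \rmr(R) - \rank_S C_{mn-1}.
\]
Combining the two bounds produces the necessary numerical condition
\[
\bigl((m - t + 1)(n - t + 1) + 1\bigr) \bigl(\rmr(R) - 1\bigr) \le \rank_S C_{mn-1}.
\]
Substituting the closed forms of Propositions~\ref{4.3} and~\ref{4.4} reduces this to a comparison between ratios of triangular products $\prod(t + i + j)$ and factorials, which I will verify fails for every $t \ge 3$ and $2 \le m < n$, giving the required contradiction.

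\emph{Main obstacle.} The principal difficulty is the elementary but intricate inequality manipulation in the final step for $t \ge 3$: the expressions for $\rmr(R)$ and $\rank_S C_{mn-1}$ involve nested products, and I expect to prove the strict reverse inequality either by induction (on $t$ or on $n - m$, in the spirit of Lemma~\ref{3.5}) or by a careful term-by-term cancellation argument. A secondary technical point is the Ulrich verification in the $m = t = 2$ sufficiency construction, which requires writing down an explicit minimal reduction of the maximal ideal of $R/\ann_R(C)$ and confirming that it generates $\fkm C$.
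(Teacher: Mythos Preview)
Your necessity argument is, in essence, the paper's proof of the implication $(2)\Rightarrow(3)$ of the \emph{main theorem} (Theorem~\ref{main}), not of Lemma~\ref{5.1}. You pass from the graded hypothesis to the local one ($R_M$ almost Gorenstein) and then invoke the full machinery of Sections~3--5: Corollary~\ref{2.3}, Theorem~\ref{3.2}, the Lascoux--Roberts resolution, Propositions~\ref{4.3}--\ref{4.4}, and the closing inequality manipulation. This would work in principle (modulo the characteristic-zero hypothesis needed for the Roberts resolution, which you do not address; the paper handles this via Remark~\ref{chacteristic}), but it is enormously more than what is needed.

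The paper's proof of Lemma~\ref{5.1} is a two-line computation exploiting a feature of the \emph{graded} almost Gorenstein condition that has no local analogue: by \cite[Theorem~1.6]{GTT}, a non-Gorenstein almost Gorenstein graded ring satisfies $\rma(R)=1-\dim R$. Plugging in the known values of $\rma(R)$ and $\dim R$ for the determinantal ring gives $(t-1)(m-(t-1))=1$, whence $m=t=2$. No resolutions, no Ulrich bounds, no induction. The whole point of isolating Lemma~\ref{5.1} is precisely that the graded case is this easy; the heavy lifting you outline is reserved for the genuinely harder local implication $(2)\Rightarrow(3)$.

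For sufficiency, the paper simply cites \cite[Example~10.5]{GTT}; your explicit Ulrich verification for $m=t=2$ is correct in outline but again unnecessary.
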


\begin{proof}
The `if' part is due to \cite[Example 10.5]{GTT}. Let us make sure of the `only if' part. Suppose that $R$ is an almost Gorenstein graded ring and $m\ne n$. Remember that $\rma(R)= -(t-1)m$ and $\dim R=mn-(m-(t-1))(n-(t-1))$. Then thanks to \cite[Theorem 1.6]{GTT}, we have $\rma(R) = 1-\dim R$, which implies $1 = (t-1)(m-(t-1))$. Consequently $m=t=2$.
\end{proof}

For a moment, suppose that $k$ is a field of characteristic $0$. Look at the graded minimal $S$-free resolution
$$
0 \to F \to G \to \cdots \to S \to R \to 0 \ \ \ \ \ \ \  (\sharp)
$$
of the determinantal ring $R$. Then by Proposition \ref{4.4}, we obtain the equality

{\footnotesize
$$
\rank_S F = \frac{\displaystyle\prod_{j=0}^{n-m-1}\left(\prod_{i=0}^{m-t}(t+i+j)\right)1!\cdot 2! \cdots (m-t-1)!\cdot (m-t)!}{(n-m)!\cdot (n-m+1)! \cdots (n-t-1)! \cdot (n-t)!}.
$$}

\noindent
Moreover if $R$ is not a Gorenstein ring, then 

{\footnotesize
$$
\rank_S G = \frac{\displaystyle\prod_{j=0}^{n-m-1}\left(\prod_{i=1}^{m-t}(t+i+j)\right)\prod_{i=0}^{n-m-2}(t+i) \cdot n\cdot 1!\cdot 2! \cdots (m-t-1)!\cdot (m-t+1)!}{(n-m-1)!\cdot (n-m+1)!\cdot(n-m+2)! \cdots (n-t-1)! \cdot (n-t)!}.
$$
}

\noindent
We now take the $S(-mn)$-dual of the resolution $(\sharp)$ to get the presentation of the graded canonical module $\rmK_R$, which yields that
$$
\mu_R(M\rmK_R) \ge mn \cdot \rmr(R) -\rank_S G\ \ \ \ \ \ \  (\dagger)
$$
by using Proposition \ref{2.4} and \cite[Section 2]{R}.
Let us consider the rational number

{\footnotesize 
$$
\alpha = \frac{\displaystyle\prod_{j=0}^{n-m-1}\left(\prod_{i=1}^{m-t}(t+i+j)\right)\prod_{i=0}^{n-m-2}(t+i) \cdot 1!\cdot 2! \cdots (m-t-1)!\cdot (m-t)!}{(n-m-1)!\cdot (n-m+1)!\cdot(n-m+2)! \cdots (n-t-1)! \cdot (n-t)!}.
$$
}

We then have the following equalities
$$
\rmr(R) = \frac{t+n-m-1}{n-m}\cdot \alpha, \ \ \  \rank_S G = n\cdot (m-t+1)\cdot \alpha.
$$

\begin{rem}\label{chacteristic}
Notice that the Hilbert series of $R$ does not depend on the field, so is the Hilbert series of $\rmK_R$ (\cite[(6.2.3) Proposition]{Weyman}, \cite[4.4 {\sc Theorem}]{Stanley}). Since $R$ is homogeneous and level, we conclude that $\mu_R(M\rmK_R)$ does not depend on the characteristic of the field $k$. Hence the inequality $(\dagger)$ holds for any characteristic of $k$.
\end{rem}

We are now ready to prove Theorem \ref{main}.

\begin{proof}[Proof of Theorem \ref{main}]
The implication $(1) \Rightarrow (2)$ follows from the definition (see \cite[Section 8]{GTT}).
By Lemma \ref{5.1}, it suffices to show that $(2) \Rightarrow (3)$. We may assume $A=R_M$ is not a Gorenstein ring, that is $m\ne n$. By Remark \ref{chacteristic} we may also assume that $k$ is a field of characteristic $0$. Choose an exact sequence
$$
0 \to A \to \rmK_A \to C \to 0
$$
of $A$-modules such that $C \ne (0)$ is an Ulrich $A$-module.
Then since $m \ne n$, we obtain the exact sequence
$$
0 \to \m \to \m \rmK_A \to \m C \to 0
$$
of $A$-modules, which yields that
\begin{eqnarray*}
\mu_A(\m \rmK_A) &\le& \mu_A(\m) + \mu_A(\m C) \\
&\le& mn + (d-1)(\rmr(A) -1 )
\end{eqnarray*}
where $d= \dim A$ and $\m =MR_M$.
Therefore we have the following inequalities
$$
mn \cdot \rmr(A) -\rank_S G \le mn + (d-1)(\rmr(A) -1 )
$$
so that
$$
\left(mn-(d-1)\right)(\rmr(A) -1 )\le \rank_S G.
$$
Hence 
$$
\{(m-(t-1))(n-(t-1))+1\}\left(\frac{t+n-m-1}{n-m}\cdot\alpha -1 \right)\le n \cdot (m-(t-1))\alpha.
$$
Let $\ell = n-m~ (>0)$ and $s = m-t ~(\ge 0)$. The above inequality implies the following
$$
\{(s+1)(s+\ell +1)+1\}\{ (t+\ell -1)\alpha -\ell \} \le (s + t+ \ell)\cdot \ell \cdot(s+1)\cdot\alpha.
$$
Then a direct computation shows
$$
\alpha \cdot\{(s+1)^2(t-1) + (t+\ell -1)\} \le \ell \cdot\{(s+1)(s+\ell +1)+1\}.
$$
We put the rational number
{\footnotesize
\begin{eqnarray*}{\hspace{1em}}
\beta &=& \frac{1}{s+1}\binom{s+t}{s} \cdot\frac{1}{s+2}\binom{s+t+1}{s+1}\cdot\frac{1}{(s+3)(t+1)}\binom{s+t+2}{s+2} \\
&& \\
&& \cdots  \frac{1}{(s+\ell -1)(t+1)\cdots(t+\ell -3)}\binom{s+t+\ell -2}{s+ \ell -2}\cdot\frac{t}{(s+\ell)(t+1)\cdots(t+\ell -1)}\binom{s+t+\ell -1}{s+ \ell -1}
\end{eqnarray*}}

\noindent
whence
$$
\alpha =  1!\cdot 2! \cdots (\ell-2)!\cdot \ell !\cdot\beta. 
$$ 
Therefore we get
$$
\left(\prod_{i=1}^{\ell -1}i !\right) \cdot \beta \cdot \{(s+1)^2(t-1) + (t+\ell -1)\} \le  (s+1)(s+\ell +1)+1.
$$
Notice that 
{\footnotesize
\begin{eqnarray*}
\beta \cdot \left(\prod_{i=1}^{\ell}(s+i)! \right)&=& (t+\ell -1)^{\ell -1}\left(\prod_{i=0}^{\ell-2} (t+i)^{i+1}(t+s+1 +i)^{\ell -(i + 1)}\right) \left(\prod_{i=\ell}^s (t+i)^{\ell}\right) 
\end{eqnarray*}}

\noindent 
and hence we get the inequality
{\footnotesize
\begin{eqnarray*}
&&\left(\prod_{i=1}^{\ell -1}i !\right) (t+\ell -1)^{\ell -1}\left(\prod_{i=0}^{\ell-2} (t+i)^{i+1}(t+s+1 +i)^{\ell -(i + 1)}\right) \left(\prod_{i=\ell}^s (t+i)^{\ell}\right) \{(s+1)^2(t-1) + (t+ \ell -1)\} \\
&& \\ 
&&{\hspace{0.5em}}\le \left(\prod_{i=1}^{\ell}(s+i)! \right)\{(s+1)(s+\ell +1)+1\}.
\end{eqnarray*}}
We now assume that $t \ge 3$ to make a contradiction. By the above inequality, we have
$$
(s+\ell + 2)(s+2)\cdot \left(\prod_{i=3}^{\ell + 1}(s+i)^2\right)\cdot \{2(s+1)^2 + \ell +2\} \le \ell ! \cdot (\ell +2)! \cdot \{(s+1)(s+\ell +1)+1\}
$$
so that 
\begin{eqnarray*}
&&(s+1)(s+\ell + 1) \left\{2(s+1)(s+2)\left(\prod_{i=3}^{\ell}(s+i)^2\right) (s+\ell +1)(s+\ell +2) - \ell! \cdot (\ell +2)!\right\} \\
&& + (\ell + 2)(s+2) \left(\prod_{i=3}^{\ell+1}(s+i)^2 \right)(s+ \ell +2) - \ell ! \cdot(\ell +2)! \le 0.
\end{eqnarray*}
Since $s \ge 0$, the left hand side of the above inequality is not less than
$$
\frac{1}{2}\cdot \ell ! \cdot (\ell + 2)! \cdot (\ell + 1)(\ell + 2) - \ell ! \cdot (\ell + 2)!
$$
which is a positive integer. This is a contradiction and hence $t=2$. Therefore by Theorem \ref{3.2}, we finally get $m=2$. This completes the proof. 
\end{proof}

Let us note the following, where $k[[X]]$ denotes the formal power series ring over the field $k$.

\begin{cor}\label{5.2}
$k[[X]]/{\rm I}_t(X)$ is an almost Gorenstein local ring if and only if either $m=n$, or $m\ne n$ and $m=t=2$.
\end{cor}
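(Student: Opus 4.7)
I would deduce Corollary \ref{5.2} from Theorem \ref{main} by observing that $k[[X]]/\rmI_t(X)$ is the $MR_M$-adic completion of the local ring $A := R_M$. It therefore suffices to verify that the almost Gorenstein property is invariant under $\m$-adic completion of a Cohen-Macaulay local ring possessing a canonical module, where $\m$ denotes the maximal ideal.

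First, I would identify $k[[X]]/\rmI_t(X)$ with $\widehat{A} = \widehat{R_M}$, which follows from the exactness of completion applied to $0 \to \rmI_t(X)\, k[X]_M \to k[X]_M \to R_M \to 0$ together with $\widehat{k[X]_M} \cong k[[X]]$.

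For the ``if'' direction, starting from any exact sequence $0 \to A \to \rmK_A \to C \to 0$ of $A$-modules with $C$ an Ulrich $A$-module, I would tensor with the faithfully flat completion $A \to \widehat{A}$. Since $\rmK_{\widehat{A}} \cong \widehat{\rmK_A}$ for a Cohen-Macaulay local ring with canonical module, this yields $0 \to \widehat{A} \to \rmK_{\widehat{A}} \to \widehat{C} \to 0$, and the equalities $\mu_{\widehat{A}}(\widehat{C}) = \mu_A(C)$ and $\rme^0_{\widehat{\m}}(\widehat{C}) = \rme^0_{\m}(C)$ (both coming from the isomorphisms $\widehat{C}/\widehat{\m}^n\widehat{C} \cong C/\m^n C$) preserve the Ulrich property; hence $\widehat{A}$ is almost Gorenstein. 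By Lemma \ref{5.1} and Theorem \ref{main}, this direction applies whenever $m=n$ or $m\ne n$ with $m=t=2$.

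For the ``only if'' direction I would either invoke the corresponding descent statement from \cite{GTT}, or, to remain self-contained, redo the numerical argument from the proof of Theorem \ref{main} for $\widehat{A}$: Corollary \ref{2.3} gives the inequality $\mu_{\widehat{A}}(\widehat{\m}\,\rmK_{\widehat{A}}) \le \mu_{\widehat{A}}(\widehat{\m}) + (d-1)(\rmr(\widehat{A}) - 1)$, and each of $\mu(\widehat{\m}\,\rmK_{\widehat{A}})$, $\mu(\widehat{\m})$, $\rmr(\widehat{A})$ equals the corresponding quantity for the graded ring $R$ (completion preserves minimal numbers of generators and Cohen-Macaulay type, and $\rmK_R$ is generated in a single degree so the graded and local-at-$M$ computations agree). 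The combinatorial obstructions used in the proofs of Theorem \ref{3.2} and Theorem \ref{main} then force $m=n$ or $m=t=2$. The only non-routine point is verifying that the descent holds, but once one commits to redoing these numerical inequalities over $\widehat{A}$, the argument proceeds verbatim because every relevant invariant is preserved by $\m$-adic completion.
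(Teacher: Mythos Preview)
Your proposal is correct and follows essentially the same route as the paper: identify $k[[X]]/\rmI_t(X)$ with the completion of $R_M$ and transfer the almost Gorenstein property between a Cohen--Macaulay local ring and its completion, then invoke Theorem~\ref{main}. The paper simply cites \cite[Theorem~3.9]{GTT} for this transfer (in both directions) rather than spelling out the faithful-flatness and Ulrich-preservation arguments you give, so your redoing of the numerical inequalities for descent is unnecessary but harmless.
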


\begin{proof}
See \cite[Theorem 3.9]{GTT}.
\end{proof}

\begin{rem}\label{5.3}
The condition $m\ne n$ and $m=t=2$ is equivalent to saying that the local ring $k[[X]]/{\rm I}_t(X)$ has a minimal multiplicity (e.g., \cite[Section 10]{GTT}).
\end{rem}

%%%%%%%%%%%%%%%%%%%%%%%%%%%%%%%%%%%%%%%%%%%%%%%%%%%%%%%%%%%%%%%%%%%%%%%%%%%%%%%%

\vspace{0.5cm}

\begin{ac}
The author would like to thank Kazuhiko Kurano and Mitsuyasu Hashimoto for their valuable advices and comments.
\end{ac}

%%%%%%%%%%%%%%%%%%%%%%%%%%%%%%%%%%%%%%%%%%%%%%%%%%%%%%%%%%%%
%\addcontentsline{toc}{section}{references}

\end{document}